\newtheorem{theorem}{Theorem}[section]
\newtheorem{lemma}[theorem]{Lemma}
\newtheorem{proposition}[theorem]{Proposition}
\newtheorem{corollary}[theorem]{Corollary}
\theoremstyle{definition}
\newtheorem{definition}[theorem]{Definition}
\newtheorem{example}[theorem]{Example}
\numberwithin{equation}{section}
\begin{document}

\title[Interpolation between domains of powers of operators]{Interpolation between domains of powers of operators in quaternionic Banach spaces}

\author{F. Colombo}
\address{(FC) Dipartimento di Matematica, Via E. Bonardi 9, 20133 Milano, Italy}
\email{fabrizio.colombo@polimi.it}

\author{P. Schlosser}
\address{(PS) Dipartimento di Matematica, Via E. Bonardi 9, 20133 Milano, Italy}
\email{pschlosser@math.tugraz.at}
\thanks{P. Schlosser was funded by the Austrian Science Fund (FWF) under Grant No. J 4685-N and by the European Union -- NextGenerationEU}

\subjclass[1991]{47A10,47A60}

\begin{abstract}
In contrast to the classical complex spectral theory, where the spectrum is related to the invertibility of $\lambda-A:D(A)\subseteq X_\mathbb{C}\rightarrow X_\mathbb{C}$, in the noncommutative quaternionic $S$-spectral theory one uses the invertibility of the second order polynomial $Q_s(T):=T^2-2\text{Re}(s)T+|s|^2:D(T^2)\subseteq X\rightarrow X$ to define the $S$-spectrum, where $X$ is a quaternionic Banach space. In this paper we will consider quaternionic operators $T$, for which at least one ray $\{te^{i\omega}\;|\;t>0\}$, $\omega\in[0,\pi]$, $i\in\mathbb{S}$ is contained in the $S$-resolvent set, and the inverse operator $Q_s^{-1}(T)$ admits certain decay properties on this ray. Utilizing the $K$-interpolation method, we then demonstrate that the domain $D(T^k)$ of the $k$-th power of $T$ is an intermediate space between $D(T^n)$ and $D(T^m)$, whenever $n<k<m\in\mathbb{N}_0$. Moreover, also a characterization of the interpolation space $(X,D(T^n))_{\theta,p}$, $\theta\in(0,1)$, $p\in[1,\infty]$, in is given in terms of integrability conditions on the pseudo $S$-resolvent $Q_s^{-1}(T)$.
\end{abstract}

\maketitle

\section{Introduction}

The foundation of quaternionic quantum mechanics, see \cite{BF}, was the initial motivation to develop the spectral theory on the $S$-spectrum, see \cite{FJBOOK,CGK,ColomboSabadiniStruppa2011}. The central challenge in advancing the spectral theory for both quaternionic and Clifford algebras settings has revolved around defining a suitable concept of spectrum. This definition needs to be applicable for the spectral theorem as well as the hyperholomorphic functional calculus within this framework.

Given the existence of multiple possible definitions of hyperholomorphicity in the hypercomplex setting, the difficulty has been in establishing the most pertinent notion that serves the purpose. The identification of slice hyperholomorphic functions played a pivotal role in determining the $S$-spectrum for both quaternionic and Clifford operators. Importantly, this identification was achieved solely through methods in hypercomplex analysis, rather than relying on physical motivations or operator theory considerations despite the foundational motivation stemming from quaternionic quantum mechanics, see the introduction in \cite{CGK} for all the details on the discovery of the $S$-spectrum.

A key distinction between the classical spectral theory in complex Banach spaces $X_\mathbb{C}$ and the $S$-spectral theory on quaternionic Banach spaces $X$, lies in the structure of the corresponding resolvent operator. While in the complex case the spectrum is related to the invertibility of the operator $\lambda-A:D(A)\subseteq X_\mathbb{C}\rightarrow X_\mathbb{C}$, one has to consider the second order polynomial
$T^2-2s_0T+|s|^2:D(T^2)\subseteq X\rightarrow X$, in the quaternionic case, in order to preserve holomorphicity of the resulting resolvent operator. Note, that is essential to consider the domain of $T^2$, and the majority of considerations related to the spectral theory on the $S$-spectrum rely on the domain of $T^2$ and not directly on $T$ itself.

In the spectral theory on the $S$-spectrum, there is an additional interest in exploring fine structures on the $S$-spectrum introduced in \cite{ACQS2016,CDPS1,Fivedim,Polyf1,MPS23}. These investigations involve the functional calculus for sectorial operators associated with axially harmonic functions and axially polyanalytic functions. For an alternative approach, based on monogenic function theory \cite{DSS92}, of the $H^\infty$-functional calculus is addressed in \cite{TAOBOOK}. The significance of sectorial operators and their $H^\infty$-functional calculus is fundamental to numerous problems in operator theory, as detailed in references such as \cite{MC97,MC06} and \cite{Haase}.

\medskip

Interpolation spaces have been extensively studied in the literature and classical references include the monographs \cite{BERG_INTER,BRUDNYI_INTER,TRIEBEL}. These spaces find applications across various fields, particularly in partial differential equations, with an extensive list of applications mentioned in \cite{Ale_PDE} and the references therein. The characterization of interpolation spaces in terms of the domains of sectorial operators is particularly relevant, as outlined in \cite{Ale_INTER}.

\medskip

\textit{The aim of this paper} is to generalize two important results of the well established $K$-interpolation theory in complex Banach spaces, to the quaternionic Banach space.

\medskip

In order to introduce our first result, we mention that in the complex case the interpolation space between $X_\mathbb{C}$ and $D(A)$ can be characterized by
\begin{equation}\label{Eq_X_DA_interpolation}
(X_\mathbb{C},D(A))_{\theta,p}=\Set{x\in X_\mathbb{C} | \lambda\mapsto\lambda^\theta\varphi_x(\lambda)\in L^p_*(0,\infty)},
\end{equation}
using the function $\varphi_x(\lambda):=\Vert A(\lambda-A)^{-1}x\Vert$ and the weighted Lebesgue space $L^p_*(0,\infty):=L^p((0,\infty),dt/t)$, see \cite[Proposition 3.1.1]{Ale_INTER} and \cite[Proposition 3.1.6]{Ale_INTER} for a similar result for $(X_\mathbb{C},D(A^2))_{\theta,p}$. Our result in Theorem \ref{thm_X_DTn_characterization} generalizes \eqref{Eq_X_DA_interpolation} to the quaternionic $S$-spectral setting, i.e. replaces the resolvent $(\lambda-A)^{-1}$ by the $S$-pseudo resolvent $Q_s^{-1}(T)=(T^2-2s_0T+|s|^2)^{-1}$, and we also consider domains $D(T^n)$ of arbitrary powers. In particular, we establish the characterization
\begin{equation*}
(X,D(T^n))_{\theta,p}=\Set{x\in X | t\mapsto t^{n\theta}\psi_x(t)\in L^p_*(0,\infty)},
\end{equation*}
where it turns out that, due to the second order pseudo $S$- resolvent operator $Q_s^{-1}(T)$, the function $\varphi_x$ has to be replaced by $\psi_x$ which has to be defined for even and for odd values of $n$ differently, namely
\begin{equation*}
\psi_x(t):=\begin{cases} \Vert T^nQ_{te^{i\omega}}^{-\frac{n}{2}}(T)x\Vert, & n\text{ even}, \\ \Vert T^{n+1}Q_{te^{i\omega}}^{-\frac{n+1}{2}}(T)x\Vert+t\Vert T^nQ_{te^{i\omega}}^{-\frac{n+1}{2}}(T)x\Vert, & n \text{ odd.} \end{cases}
\end{equation*}
The second goal of this paper deals with intermediate spaces, these are well known in the complex case, but the extension to the quaternionic setting requires more sophisticated proofs. In Theorem \ref{thm_J_intermediate} and Theorem \ref{thm_K_intermediate} we prove that for every $n<k<m\in\mathbb{N}_0$, the domain $D(T^k)$ is an intermediate space between $D(T^n)$ and $D(T^m)$. More precisely, using the Definition \ref{defi_Intermediate_spaces} of intermediate spaces we prove that
\begin{equation*}
D(T^k)\in J_{\frac{k-n}{m-n}}\big(D(T^n),D(T^m)\big)\qquad\text{and}\qquad D(T^k)\in K_{\frac{k-n}{m-n}}\big(D(T^n),D(T^m)\big).
\end{equation*}
Explicitly this means that there exist constants $C_J,C_K\geq 0$, such that
\begin{align*}
\Vert x\Vert_{D(T^k)}&\leq C_J\Vert x\Vert_{D(T^n)}^{\frac{m-k}{m-n}}\Vert x\Vert_{D(T^m)}^{\frac{k-n}{m-n}}, && x\in D(T^m), \\
K(t,x)&\leq C_Kt^{\frac{k-n}{m-n}}\Vert x\Vert_{D(T^k)}, && x\in D(T^k),\,t>0,
\end{align*}
where $K(t,x)$ is the $K$-functional related to the spaces $D(T^n)$, $D(T^m)$.

\section{Interpolation theory of quaternionic Banach spaces}\label{sec_Interpolation_theory_of_quaternionic_Banach_spaces}

In this section we will give standard results of interpolation theory, which are well known in the complex case, but generalizes also to Banach spaces over the quaternions. We start with a short introduction to quaternionic numbers and quaternionic (right) Banach spaces.

\medskip

The \textit{quaternionic numbers} are defined as
\begin{equation*}
\mathbb{H}:=\Set{s_0+s_1e_1+s_2e_2+s_3e_3 | s_0,s_1,s_2,s_3\in\mathbb{R}},
\end{equation*}
with the three imaginary units $e_1,e_2,e_3$ satisfying the relations
\begin{equation*}
e_1^2=e_2^2=e_3^2=-1\qquad\text{and}\qquad\begin{array}{l} e_1e_2=-e_2e_1=e_3, \\ e_2e_3=-e_3e_2=e_1, \\ e_3e_1=-e_1e_3=e_2. \end{array}
\end{equation*}
For every quaternion $s\in\mathbb{H}$, we set
\begin{align*}
\text{Re}(s)&:=s_0, && (\textit{real part}) \\
\text{Im}(s)&:=s_1e_1+s_2e_2+s_3e_3, && (\textit{imaginary part}) \\
\overline{s}&:=s_0-s_1e_1-s_2e_2-s_3e_3, && (\textit{conjugate}) \\
|s|&:=\sqrt{s_0^2+s_1^2+s_2^2+s_3^2}. && (\textit{modulus})
\end{align*}
The unit sphere of purely imaginary quaternions is defined as
\begin{equation*}
\mathbb{S}:=\Set{s\in\mathbb{H} | s_0=0\text{ and }|s|=1},
\end{equation*}
and for every $i\in\mathbb{S}$ we consider the complex plane
\begin{equation*}
\mathbb{C}_i:=\Set{x+iy | x,y\in\mathbb{R}},
\end{equation*}
which is an isomorphic copy of the complex numbers, since every $i\in\mathbb{S}$ satisfies $i^2=-1$. Moreover, for every $s\in\mathbb{H}$ we consider the corresponding $2$-sphere
\begin{equation*}
[s]:=\Set{\text{Re}(s)+i\vert\text{Im}(s)\vert | i\in\mathbb{S}}.
\end{equation*}

\begin{definition}[Right quaternionic Banach space]
A \textit{quaternionic right vector space} is an additive group $(X,+)$ equipped with a scalar multiplication from the right $\cdot:X\times\mathbb{H}\rightarrow X$, which satisfies
\begin{equation*}
(x+y)s=xs+ys,\qquad x(s+q)=xs+sq,\qquad x(sq)=(xs)q,\qquad x1=x,
\end{equation*}
for every $x,y\in X$, $s,q\in\mathbb{H}$. A \textit{right norm} on $X$ is a mapping $\Vert\cdot\Vert:X\rightarrow[0,\infty)$, with the properties

\begin{enumerate}
\item[i)] $\Vert xs\Vert=\Vert x\Vert\,|s|$,\hspace{1.6cm} $x\in X,\,s\in\mathbb{H}$,
\item[ii)] $\Vert x+y\Vert\leq\Vert x\Vert+\Vert y\Vert$,\qquad $x,y\in X$,
\item[iii)] $\Vert x\Vert=0\Rightarrow x=0$,\hspace{1.15cm} $x\in X$.
\end{enumerate}
Finally, $X$ is called \textit{right Banach space}, if it is complete with respect to this norm.
\end{definition}

On right Banach spaces we can now define right linear operators.

\begin{definition}[Right linear operator]
Let $X$ be a right Banach space. An operator $T:X\rightarrow X$ is called right linear, if its domain $D(T)\subseteq X$ is a right linear subspace of $X$ and for every $x,y\in D(T)$, $s\in\mathbb{H}$ there holds
\begin{equation*}
T(x+y)=T(x)+T(y)\qquad\text{and}\qquad T(xs)=T(x)s.
\end{equation*}
For any right linear operator $T$ and any real number $\lambda$, we define the operators
\begin{equation*}
(\lambda T)(x):=(T\lambda)(x):=T(x)\lambda\qquad\text{with }D(\lambda T):=D(T\lambda):=D(T).
\end{equation*}
The operator $T$ is called closed, if $D(T)$ is complete with respect to the norm
\begin{equation*}
\Vert x\Vert_{D(T)}:=\Vert x\Vert+\Vert Tx\Vert,\qquad x\in D(T).
\end{equation*}
\end{definition}

\begin{definition}[Interpolation couple]
Two right Banach spaces $X$ and $Y$ are called \textit{interpolation couple} if there exists some topological Hausdorff space $Z$, such that both $X$ and $Y$ are continuously embedded in $Z$.
\end{definition}

For any interpolation couple $X,Y$, the intersection $X\cap Y$ as well as the sum $X+Y$ are right Banach spaces with norms
\begin{align*}
\Vert x\Vert_{X\cap Y}:=&\max\{\Vert x\Vert_X,\Vert x\Vert_Y\}, \\
\Vert x\Vert_{X+Y}:=&\inf\limits_{x=a+b,a\in X,b\in Y}\big(\Vert a\Vert_X+\Vert b\Vert_Y\big).
\end{align*}
The proof is the same as in the complex case \cite[Lemma 2.3.1]{BERG_INTER}. In between $X\cap Y$ and $X+Y$ we will now construct several interpolation spaces. A key role will play the following $K$-functional, which is a family of equivalent norms on $X+Y$.

\begin{definition}[$K$-functional]
For an interpolation couple $X,Y$ we define the \textit{$K$-functional}
\begin{equation}\label{Eq_K_functional}
K(t,x):=\inf_{x=a+b,a\in X,b\in Y}(\Vert a\Vert_X+t\Vert b\Vert_Y),\qquad t>0,\,x\in X+Y.
\end{equation}
\end{definition}

Clearly, the $K$-functional depends on the spaces $X$ and $Y$, but for simplicity we will omit this dependence. In the following we will need for any $p\in[1,\infty]$ the space
\begin{equation}\label{Eq_Lpstar}
L^p_*(0,\infty):=L^p\big((0,\infty),dt/t\big),
\end{equation}
which is the Lebesgue-space on $(0,\infty)$ with the weighted measure $dt/t$. One can easily check by substituting $t\rightarrow t^\alpha$, that with the convention $|\alpha|^{\frac{1}{\infty}}:=1$, this space has the property
\begin{equation}\label{Eq_Lpstar_symmetry}
\Vert f(t)\Vert_{L^p_*(0,\infty)}=|\alpha|^{\frac{1}{p}}\Vert f(t^\alpha)\Vert_{L^p_*(0,\infty)},\qquad\alpha\in\mathbb{R}\setminus\{0\}.
\end{equation}
With the space \eqref{Eq_Lpstar} and the $K$-functional \eqref{Eq_K_functional}, we now define interpolation spaces.

\begin{definition}[Interpolation spaces]
Let $X,Y$ be an interpolation couple. Then for every $0<\theta<1$ and $p\in[1,\infty]$, we define the \textit{interpolation space}
\begin{equation}\label{Eq_Interpolation_space}
(X,Y)_{\theta,p}:=\Set{x\in X+Y | t\mapsto t^{-\theta}K(t,x)\in L^p_*(0,\infty)}.
\end{equation}
Equipped with the right norm
\begin{equation*}
\Vert x\Vert_{\theta,p}:=\Vert t^{-\theta}K(t,x)\Vert_{L^p_*(0,\infty)},
\end{equation*}
this space becomes a right Banach space.
\end{definition}

That $\Vert\cdot\Vert_{\theta,p}$ is a right norm, follows immediately from the fact that $x\mapsto K(t,x)$ as well as $\Vert\cdot\Vert_{L^p_*(0,\infty)}$ are right norms, see \cite[Lemma 3.1.1]{BERG_INTER}. For the completeness of $(X,Y)_{\theta,p}$ one can imitate the proof of \cite[Proposition 1.1.5]{Ale_INTER}. These interpolation spaces admit now certain immediate properties. Some of them will be \textit{continuous embeddings}, where we use the notation that $V\hookrightarrow W$, whenever $V\subseteq W$ and there exists some constant $C\geq 0$ such that $\Vert v\Vert_W\leq C\Vert v\Vert_V$, for every $v\in V$.

\begin{lemma}\label{lem_Properties_interpolation_spaces}
Let $X,Y$ be an interpolation couple, $\theta,\eta\in(0,1)$, $p,q\in[1,\infty]$. Then there holds:

\begin{enumerate}
\item[i)] $(X,Y)_{\theta,p}=(Y,X)_{1-\theta,p}$, with equal norms.
\item[ii)] $(X,X)_{\theta,p}=X$, with equivalent norms.
\item[iii)] $(X,Y)_{\theta,p}=\{0\}$,\hspace{4.75cm} if $X\cap Y=\{0\}$.
\item[iv)] $X\cap Y\hookrightarrow(X,Y)_{\theta,p}\hookrightarrow(X,Y)_{\theta,q}\hookrightarrow X+Y$,\qquad if $p\leq q$.
\item[v)] $(X,Y)_{\theta,p}\hookrightarrow(X,Y)_{\eta,q}$,\hspace{3.75cm} if $Y\hookrightarrow X$ and $\theta>\eta$.
\item[vi)] $X\cap Y$ is dense in $(X,Y)_{\theta,p}$,\hspace{2.85cm} if $p<\infty$.
\end{enumerate}
\end{lemma}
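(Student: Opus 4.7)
All six statements are classical for the real $K$-method and their complex-case proofs transfer verbatim to right quaternionic Banach spaces, since both the $K$-functional and the interpolation norm only involve norm axioms and infima over Minkowski sums. I outline the arguments grouped by their difficulty.

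Items (i)--(iii) reduce to short computations. For (i), the symmetry identity $K_{X,Y}(t,x)=tK_{Y,X}(t^{-1},x)$ follows by rewriting $\|a\|_X+t\|b\|_Y=t(\|a\|_X/t+\|b\|_Y)$; multiplying by $t^{-\theta}$ and applying \eqref{Eq_Lpstar_symmetry} with $\alpha=-1$ yields equal norms. For (ii), the decompositions $x=x+0=0+x$ combined with the lower bound $\|a\|_X+t\|b\|_X\geq\min(1,t)\|a+b\|_X$ give $K(t,x)=\min(1,t)\|x\|_X$, whose weighted $L^p_*$-norm is a finite beta-type multiple of $\|x\|_X$. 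For (iii), $X\cap Y=\{0\}$ forces uniqueness of the decomposition $x=a+b$ with $a\in X$, $b\in Y$; then $t^{-\theta}K(t,x)=\|a\|_X t^{-\theta}+\|b\|_Y t^{1-\theta}$ is a sum of nonzero real powers of $t$, none of which belongs to $L^p_*((0,\infty),dt/t)$, forcing $a=b=0$.

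Items (iv) and (v) rest on the monotonicity of $K(t,x)$ (non-decreasing) and of $K(t,x)/t$ (non-increasing). In (iv), the left embedding follows from $K(t,x)\leq\min(1,t)\|x\|_{X\cap Y}$; the right embedding from the dual bound $K(t,x)\geq\min(1,t)K(1,x)=\min(1,t)\|x\|_{X+Y}$; the middle one is done in two steps, namely $(X,Y)_{\theta,p}\hookrightarrow(X,Y)_{\theta,\infty}$ by testing the $L^p_*$-integral on a dyadic interval $[t_0,2t_0]$ via monotonicity of $K$ to extract $t_0^{-\theta}K(t_0,x)\leq C\|x\|_{\theta,p}$, then the log-convexity estimate $\|f\|_{L^q_*}\leq\|f\|_{L^p_*}^{p/q}\|f\|_{L^\infty_*}^{1-p/q}$. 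For (v), I would reduce to the extreme case $(X,Y)_{\theta,\infty}\hookrightarrow(X,Y)_{\eta,1}$ via (iv); under $Y\hookrightarrow X$ one has the two bounds $K(t,x)\leq t^\theta\|x\|_{\theta,\infty}$ (used on $(0,1)$) and $K(t,x)\leq\|x\|_X\leq C\|x\|_{\theta,\infty}$ (used on $(1,\infty)$), so $\int_0^\infty t^{-\eta}K(t,x)\,dt/t$ converges because $\theta>\eta>0$.

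The main obstacle is (vi). For $x\in(X,Y)_{\theta,p}$ with $p<\infty$, the goal is to approximate $x$ by elements of $X\cap Y$. The classical strategy, mirroring \cite[Proposition 1.1.3]{Ale_INTER}, is to pick almost-optimal decompositions $x=a(t)+b(t)$ with $\|a(t)\|_X+t\|b(t)\|_Y\leq 2K(t,x)$ and exploit that the differences $a(t_2)-a(t_1)=b(t_1)-b(t_2)$ automatically lie in $X\cap Y$. Building approximants $x_n$ from such differences on dyadic scales $[1/n,n]$, one shows $K(t,x-x_n)\leq C K(t,x)$ uniformly, and the hypothesis $p<\infty$ lets dominated convergence against the $L^p_*$-envelope $t^{-\theta}K(t,x)$ deliver $\|x-x_n\|_{\theta,p}\to 0$. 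The delicate steps are making a measurable selection of $(a(t),b(t))$ to render the construction well-defined and verifying that the approximants converge pointwise in the $K$-functional sense.
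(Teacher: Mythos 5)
Your proposal is correct and takes essentially the same route as the paper, whose ``proof'' is simply the observation that the classical complex-case arguments (cited from Lunardi, Propositions 1.1.3, 1.1.4 and 1.2.5) transfer verbatim to right quaternionic Banach spaces; your computations for i)--v) are exactly those arguments, and your outline of vi) is the standard one, completable via the uniform bound $K(t,x-x_n)\leq C\,K(t,x)$ obtained from the monotonicity of $t\mapsto K(t,x)$ and $t\mapsto K(t,x)/t$ together with dominated convergence (using $p<\infty$ to get $K(t,x)\to 0$ as $t\to 0$ and $K(t,x)/t\to 0$ as $t\to\infty$). Only minor remark: no measurable selection of $(a(t),b(t))$ is needed in vi), since the construction uses only countably many scales, so choosing one near-optimal decomposition per scale suffices.
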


\begin{proof}
The proofs are the formally the same as in the complex case. The statements i) -- iv) can be found in \cite[Proposition 1.1.3]{Ale_INTER}, the property v) is proven in \cite[Proposition 1.1.4]{Ale_INTER} and the result vi) can be taken from \cite[Proposition 1.2.5]{Ale_INTER}.
\end{proof}

Next, we consider the boundedness of operators restricted to interpolation spaces.

\begin{theorem}
Let $X,Y$ and $V,W$ be two interpolations couples, $\theta\in(0,1)$ and $p\in[1,\infty]$. Consider some right linear operator $T:X+Y\rightarrow V+W$, with bounded restrictions $T|_X\in\mathcal{B}(X,V)$ and $T|_Y\in\mathcal{B}(Y,W)$. Then also the restriction $T|_{(X,Y)_{\theta,p}}\in\mathcal{B}\big((X,Y)_{\theta,p},(V,W)_{\theta,p}\big)$ is bounded, with operator norm
\begin{equation*}
\Vert T|_{(X,Y)_{\theta,p}}\Vert\leq\Vert T|_X\Vert^{1-\theta}\Vert T|_Y\Vert^\theta.
\end{equation*}
\end{theorem}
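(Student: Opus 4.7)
The plan is to push $T$ through the $K$-functional directly and then exploit the dilation invariance of the measure $dt/t$. Abbreviate $M_X:=\Vert T|_X\Vert$ and $M_Y:=\Vert T|_Y\Vert$. For any $x\in(X,Y)_{\theta,p}\subseteq X+Y$ and any admissible decomposition $x=a+b$ with $a\in X$, $b\in Y$, right linearity of $T$ gives $Tx=Ta+Tb$ with $Ta\in V$, $Tb\in W$, so $Tx\in V+W$. The definition of $K_{V,W}$ and the operator bounds on $T|_X$, $T|_Y$ immediately yield
\[
K_{V,W}(t,Tx)\;\leq\;\Vert Ta\Vert_V+t\Vert Tb\Vert_W\;\leq\;M_X\Vert a\Vert_X+tM_Y\Vert b\Vert_Y.
\]

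Assuming for the moment $M_X,M_Y>0$, I factor out $M_X$ and take the infimum over all decompositions $x=a+b$ to obtain the key pointwise bound
\[
K_{V,W}(t,Tx)\;\leq\;M_X\,K_{X,Y}\!\Bigl(\tfrac{M_Y}{M_X}t,\,x\Bigr),\qquad t>0.
\]
The degenerate cases $M_X=0$ or $M_Y=0$ are handled separately: if $M_X=0$ then $T|_X=0$ and $K_{V,W}(t,Tx)\leq tM_Y\Vert b\Vert_Y$ for every $b$, which after taking the infimum shows $T$ maps into the closure of $W$ and gives a bound involving only $M_Y$; the symmetric case is analogous. These boundary cases make the final estimate $M_X^{1-\theta}M_Y^\theta$ hold trivially (interpreting $0^0=0$ when needed, since the opposite factor forces $Tx=0$).

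Now multiply by $t^{-\theta}$ and take the $L^p_*(0,\infty)$-norm. From the previous display,
\[
\Vert t^{-\theta}K_{V,W}(t,Tx)\Vert_{L^p_*(0,\infty)}\;\leq\;M_X\,\bigl\Vert t^{-\theta}K_{X,Y}(\tfrac{M_Y}{M_X}t,x)\bigr\Vert_{L^p_*(0,\infty)}.
\]
Here I invoke the dilation invariance of $dt/t$: the substitution $t\mapsto\frac{M_X}{M_Y}t$ (more conceptually, a direct application of \eqref{Eq_Lpstar_symmetry} with $\alpha=1$ after a rescaling) turns $K_{X,Y}(\frac{M_Y}{M_X}t,x)$ into $K_{X,Y}(t,x)$ at the cost of the factor $(M_Y/M_X)^\theta$ coming from the weight $t^{-\theta}$. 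This delivers exactly
\[
\Vert Tx\Vert_{(V,W)_{\theta,p}}\;\leq\;M_X\cdot\Bigl(\tfrac{M_Y}{M_X}\Bigr)^{\!\theta}\Vert x\Vert_{(X,Y)_{\theta,p}}\;=\;M_X^{1-\theta}M_Y^\theta\,\Vert x\Vert_{(X,Y)_{\theta,p}}.
\]

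The proof is essentially a carbon copy of the complex case (cf.\ \cite[Proposition 1.1.3]{Ale_INTER}) because nothing in the $K$-functional machinery is sensitive to the noncommutativity of $\mathbb{H}$: right linearity of $T$ is exactly what is needed for $Ta$, $Tb$ to make sense, and the norms involved are all real-valued. The only point requiring mild care is checking that the $L^p_*$ dilation identity \eqref{Eq_Lpstar_symmetry} is applicable inside the norm of a composition of $K_{X,Y}$ with a linear rescaling, which is immediate by a change of variables. I expect no genuine obstacle; the main place one could slip is bookkeeping the exponents in the dilation step, i.e.\ confirming that the factor is $(M_Y/M_X)^\theta$ rather than $(M_Y/M_X)^{1-\theta}$.
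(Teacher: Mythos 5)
Your argument is correct and is precisely the standard $K$-functional proof that the paper itself does not spell out but delegates to \cite[Theorem 1.1.6]{Ale_INTER}: the pointwise bound $K_{V,W}(t,Tx)\leq M_X\,K_{X,Y}\big(\tfrac{M_Y}{M_X}t,x\big)$ combined with the invariance of $dt/t$ under dilations gives the factor $(M_Y/M_X)^\theta$ and hence the claimed norm estimate. Two cosmetic remarks: the rescaling step rests on the invariance of the measure $dt/t$ under $t\mapsto ct$ (a plain change of variables), not on \eqref{Eq_Lpstar_symmetry} with $\alpha=1$; and the degenerate cases $M_X=0$ or $M_Y=0$ are most cleanly dispatched by applying your main estimate with $M_X+\varepsilon$, $M_Y+\varepsilon$ and letting $\varepsilon\to0$, rather than by the somewhat vague closure argument you sketch.
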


\begin{proof}
See \cite[Theorem 1.1.6]{Ale_INTER}.
\end{proof}

Beside the interpolation spaces \eqref{Eq_Interpolation_space}, which are embedded in between $X\cap Y$ and $X+Y$ by Lemma \ref{lem_Properties_interpolation_spaces} iv), we will now discuss classes of intermediate spaces in between $X\cap Y$ and $X+Y$.

\begin{definition}[Intermediate spaces]\label{defi_Intermediate_spaces}
Let $X,Y$ be an interpolation couple and $\theta\in(0,1)$. For right Banach spaces $E$ with $X\cap Y\hookrightarrow E\hookrightarrow X+Y$, we now consider two classes of \textit{intermediate spaces}:

\medskip

We say $E\in J_\theta(X,Y)$, if there exists $C_J\geq 0$, such that
\begin{equation*}
\Vert x\Vert_E\leq C_J\Vert x\Vert_X^{1-\theta}\Vert x\Vert_Y^\theta,\qquad x\in X\cap Y.
\end{equation*}
We say $E\in K_\theta(X,Y)$, if there exists some $C_K\geq 0$, such that
\begin{equation*}
K(t,x)\leq C_Kt^\theta\Vert x\Vert_E,\qquad t>0,\,x\in E.
\end{equation*}
\end{definition}

There exists the following characterization of the set of intermediate spaces $J_\theta(X,Y)$ and $K_\theta(X,Y)$, which connects them to the interpolation spaces \eqref{Eq_Interpolation_space}.

\begin{proposition}
Let $X,Y$ be an interpolation couple, $\theta\in(0,1)$ and $E$ a right Banach space with $X\cap Y\hookrightarrow E\hookrightarrow X+Y$. Then there holds

\begin{enumerate}
\item[i)] $E\in J_\theta(X,Y)\Leftrightarrow(X,Y)_{\theta,1}\hookrightarrow E$.
\item[ii)] $E\in K_\theta(X,Y)\Leftrightarrow E\hookrightarrow(X,Y)_{\theta,\infty}$.
\end{enumerate}
\end{proposition}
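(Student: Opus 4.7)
The plan is to treat the two equivalences separately, since part ii) is essentially a restatement of definitions while part i) requires a genuine atomic decomposition of elements in $(X,Y)_{\theta,1}$.

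For ii), both implications unfold by unpacking the norm on $(X,Y)_{\theta,\infty}$. The hypothesis $E\in K_\theta(X,Y)$ says $K(t,x)\leq C_Kt^\theta\|x\|_E$ for all $t>0$ and $x\in E$; dividing by $t^\theta$ and taking the supremum in $t$ gives $\|x\|_{\theta,\infty}=\sup_{t>0}t^{-\theta}K(t,x)\leq C_K\|x\|_E$, which is exactly $E\hookrightarrow (X,Y)_{\theta,\infty}$. The converse runs the same chain of inequalities backwards, so ii) needs no more than this.

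For the easy direction of i), namely $(X,Y)_{\theta,1}\hookrightarrow E\Rightarrow E\in J_\theta(X,Y)$, I would fix $x\in X\cap Y$, use the elementary estimate $K(t,x)\leq\min(\|x\|_X,t\|x\|_Y)$, and split the defining integral of $\|x\|_{\theta,1}$ at $t_0:=\|x\|_X/\|x\|_Y$. A short calculation yields
\begin{equation*}
\|x\|_{\theta,1}\leq\frac{1}{\theta(1-\theta)}\|x\|_X^{1-\theta}\|x\|_Y^\theta,
\end{equation*}
after which the assumed embedding $\|x\|_E\leq C\|x\|_{\theta,1}$ immediately supplies the required $J_\theta$ inequality.

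The hard direction of i), $E\in J_\theta(X,Y)\Rightarrow (X,Y)_{\theta,1}\hookrightarrow E$, is the main obstacle and relies on the \emph{fundamental lemma} of $K$-interpolation: every $x\in (X,Y)_{\theta,1}$ admits a representation as a series of atoms $u_n\in X\cap Y$ whose $J$-functional $J(\tau,y):=\max(\|y\|_X,\tau\|y\|_Y)$ is summably controlled by $\|x\|_{\theta,1}$. Concretely, for each $n\in\mathbb{Z}$ I would choose a near-optimal decomposition $x=a_n+b_n$ with $\|a_n\|_X+2^n\|b_n\|_Y\leq 2K(2^n,x)$, set $u_n:=a_{n+1}-a_n=b_n-b_{n+1}\in X\cap Y$, and verify by telescoping that $\sum_n u_n=x$ in $X+Y$; convergence at the two tails is forced by the integrability of $t^{-\theta}K(t,x)$ against $dt/t$, which drives $K(2^n,x)\to 0$ as $n\to-\infty$ and $2^{-n\theta}K(2^n,x)\to 0$ as $n\to\infty$ along a suitable subsequence. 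Combining the elementary bound $\|y\|_X^{1-\theta}\|y\|_Y^\theta\leq\tau^{-\theta}J(\tau,y)$ with $J(2^n,u_n)\leq 4K(2^n,x)+3K(2^{n+1},x)$ and the $J_\theta$ hypothesis yields
\begin{equation*}
\|x\|_E\leq\sum_n\|u_n\|_E\leq C_J\sum_n 2^{-n\theta}J(2^n,u_n)\leq C\|x\|_{\theta,1},
\end{equation*}
which is the desired embedding. Since the entire construction uses only addition, right scalar multiplication by positive reals, and the Banach space structure of $X\cap Y$ and $X+Y$ — none of which is sensitive to the noncommutativity of $\mathbb{H}$ — the classical arguments from \cite{BERG_INTER} and \cite{Ale_INTER} transfer verbatim, and the proof can ultimately be condensed to a citation of the complex-case reference.
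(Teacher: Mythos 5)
Your proposal is correct and is exactly the standard argument (part ii) by unwinding definitions, part i) via the elementary $K$-estimate for $x\in X\cap Y$ and, in the hard direction, the fundamental lemma's telescoping atomic decomposition $u_n=a_{n+1}-a_n$ with near-optimal splittings at $t=2^n$), which is precisely the proof behind the citation the paper gives, since the paper proves this proposition only by referring to \cite[Theorem 3.2.5]{BERG_INTER} on the grounds that the argument is insensitive to the quaternionic scalar structure. Your closing remark that the construction uses only the additive and real-scalar structure, so the complex-case proof transfers verbatim, is the same justification the paper relies on; the only cosmetic point is that the tail behaviour $K(t,x)\to 0$ as $t\to 0$ and $K(t,x)/t\to 0$ as $t\to\infty$ follows for the full sequence from monotonicity of $K(t,x)$ and of $K(t,x)/t$, so no passage to a subsequence is needed.
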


\begin{proof}
See \cite[Theorem 3.2.5]{BERG_INTER}.
\end{proof}

In particular, it follows from Lemma \ref{lem_Properties_interpolation_spaces} iv), that for every $\theta\in(0,1)$, $p\in[1,\infty]$ there holds
\begin{equation*}
(X,Y)_{\theta,p}\in J_\theta(X,Y)\cap K_\theta(X,Y).
\end{equation*}
The final example is taken from \cite[Example 1.3.3]{Ale_INTER} and shows that there indeed exist intermediate spaces which are not interpolation spaces.

\begin{example}
Let $C^k(\mathbb{R}^n)$ be the space of $k$-times continuously differentiable functions on $\mathbb{R}^n$ with bounded derivatives. Then
\begin{equation*}
C^1(\mathbb{R}^n)\in J_{\frac{1}{2}}\big(C(\mathbb{R}^n),C^2(\mathbb{R}^n)\big)\cap K_{\frac{1}{2}}\big(C(\mathbb{R}^n),C^2(\mathbb{R}^n)\big),
\end{equation*}
but $C^1(\mathbb{R}^n)$ is not of the form $\big(C(\mathbb{R}^n),C^2(\mathbb{R}^n)\big)_{\frac{1}{2},p}$ for any $p\in[1,\infty]$.
\end{example}

The last result of this section is the following reiteration theorem, which tells what happens if we interpolate in between intermediate spaces.

\begin{theorem}[Reiteration theorem]
Let $X,Y$ be an interpolation , $\theta_0,\theta_1,\theta\in(0,1)$, $p\in[1,\infty]$. Then for $E_0\in J_{\theta_0}(X,Y)\cap K_{\theta_0}(X,Y)$ and $E_1\in J_{\theta_1}(X,Y)\cap K_{\theta_1}(X,Y)$, there holds
\begin{equation*}
(E_0,E_1)_{\theta,p}=(X,Y)_{(1-\theta)\theta_0+\theta\theta_1,p},\quad\text{with equivalent norms}.
\end{equation*}
\end{theorem}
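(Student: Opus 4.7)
Set $\alpha:=(1-\theta)\theta_0+\theta\theta_1$. The plan is to establish the two continuous embeddings $(E_0,E_1)_{\theta,p}\hookrightarrow(X,Y)_{\alpha,p}$ and $(X,Y)_{\alpha,p}\hookrightarrow(E_0,E_1)_{\theta,p}$. By the symmetry $(E_0,E_1)_{\theta,p}=(E_1,E_0)_{1-\theta,p}$ from Lemma \ref{lem_Properties_interpolation_spaces} i), I may assume $\theta_0<\theta_1$. The easier direction exploits only the $K_{\theta_i}$-property of the $E_i$: for any decomposition $x=a+b$ with $a\in E_0$, $b\in E_1$, combining the triangle inequality for the $K$-functional with $K(t,a;X,Y)\leq C_Kt^{\theta_0}\Vert a\Vert_{E_0}$ and $K(t,b;X,Y)\leq C_Kt^{\theta_1}\Vert b\Vert_{E_1}$, and then taking the infimum over such splittings, yields
\begin{equation*}
K(t,x;X,Y)\leq Ct^{\theta_0}K(t^{\theta_1-\theta_0},x;E_0,E_1).
\end{equation*}
Since $\theta_0-\alpha=-\theta(\theta_1-\theta_0)$, the substitution $u=t^{\theta_1-\theta_0}$ together with the scaling identity \eqref{Eq_Lpstar_symmetry} transforms $\Vert t^{-\alpha}K(t,x;X,Y)\Vert_{L^p_*(0,\infty)}$ into a constant multiple of $\Vert x\Vert_{(E_0,E_1)_{\theta,p}}$.

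For the reverse embedding the $J_{\theta_i}$-property becomes crucial. Given $x\in(X,Y)_{\alpha,p}$, for each $s>0$ I would pick a near-optimal splitting $x=a(s)+b(s)$ in $X+Y$ with $\Vert a(s)\Vert_X+s\Vert b(s)\Vert_Y\leq 2K(s,x;X,Y)$, reduced via the fundamental lemma of interpolation theory to the case $a(s),b(s)\in X\cap Y$. The $J_{\theta_i}$-inequalities
\begin{equation*}
\Vert a(s)\Vert_{E_0}\leq C_J\Vert a(s)\Vert_X^{1-\theta_0}\Vert a(s)\Vert_Y^{\theta_0},\qquad\Vert b(s)\Vert_{E_1}\leq C_J\Vert b(s)\Vert_X^{1-\theta_1}\Vert b(s)\Vert_Y^{\theta_1}
\end{equation*}
then give $K(\tau,x;E_0,E_1)\leq\Vert a(s)\Vert_{E_0}+\tau\Vert b(s)\Vert_{E_1}$, and coupling $\tau=s^{\theta_1-\theta_0}$ expresses $\tau^{-\theta}K(\tau,x;E_0,E_1)$ as a Hardy-type convolution of $t^{-\alpha}K(t,x;X,Y)$. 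Hardy's inequality on $L^p_*(0,\infty)$ then yields $\Vert x\Vert_{(E_0,E_1)_{\theta,p}}\leq C\Vert x\Vert_{(X,Y)_{\alpha,p}}$.

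The main obstacle lies in this second direction, since the $J_{\theta_i}$-estimates only produce information about vectors in $X\cap Y$, whereas the near-optimal decompositions supplied by the definition of $K$ merely put $a(s)\in X$ and $b(s)\in Y$. The fundamental lemma resolves this by replacing an arbitrary $x\in(X,Y)_{\alpha,p}$ with a dyadic sum $\sum_nu_n$ of elements $u_n\in X\cap Y$ whose $X$- and $Y$-norms are controlled by $K(2^n,x;X,Y)$; this construction depends only on the right-linear Banach structure, the triangle inequality, and the scaling identity \eqref{Eq_Lpstar_symmetry}. Since none of these ingredients is affected by the noncommutativity of $\mathbb{H}$, the classical proof of the reiteration theorem (e.g.\ \cite[Theorem 3.5.3]{BERG_INTER}) transfers to the quaternionic setting essentially verbatim.
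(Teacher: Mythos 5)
Your argument is precisely the classical proof of the reiteration theorem (the $K_{\theta_i}$-property gives $(E_0,E_1)_{\theta,p}\hookrightarrow(X,Y)_{\alpha,p}$ via the scaling identity, while the $J_{\theta_i}$-property combined with the fundamental lemma's dyadic decomposition $x=\sum_n u_n$, $u_n\in X\cap Y$, and a Hardy-type estimate gives the reverse embedding), and this is essentially the same route as the paper, whose proof consists of citing \cite[Theorem 1.3.5]{Ale_INTER} on the grounds that the classical argument transfers verbatim to quaternionic right Banach spaces. The only caveat is that, as in the cited classical statement, the hypothesis $\theta_0\neq\theta_1$ is needed (your reduction to $\theta_0<\theta_1$ tacitly uses it), a condition the paper's formulation leaves implicit.
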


\begin{proof}
See \cite[Theorem 1.3.5]{Ale_INTER}.
\end{proof}

\begin{corollary}
Let $X,Y$ be an interpolation couple, $\theta_0,\theta_1,\theta\in(0,1)$, $p_0,p_1,p\in[1,\infty]$. Then there holds
\begin{equation*}
\big((X,Y)_{\theta_0,p_0},(X,Y)_{\theta_1,p_1}\big)_{\theta,p}=(X,Y)_{(1-\theta)\theta_0+\theta\theta_1,p},\quad\text{with equivalent norms.}
\end{equation*}
\end{corollary}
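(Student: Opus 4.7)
The plan is to recognize this corollary as an immediate application of the reiteration theorem just stated, once we verify that the two intermediate space hypotheses are satisfied by the interpolation spaces themselves.

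First, I would invoke the observation made in the paragraph preceding the Example, namely that for any $\theta\in(0,1)$ and any $p\in[1,\infty]$ one has
\begin{equation*}
(X,Y)_{\theta,p}\in J_\theta(X,Y)\cap K_\theta(X,Y).
\end{equation*}
This is a consequence of Lemma \ref{lem_Properties_interpolation_spaces} iv), which supplies the two embeddings $(X,Y)_{\theta,1}\hookrightarrow(X,Y)_{\theta,p}\hookrightarrow(X,Y)_{\theta,\infty}$, combined with the previous proposition characterizing $J_\theta$ and $K_\theta$ in terms of the endpoint interpolation spaces.

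Applying this to the two exponents appearing in the statement gives
\begin{equation*}
E_0:=(X,Y)_{\theta_0,p_0}\in J_{\theta_0}(X,Y)\cap K_{\theta_0}(X,Y),\qquad E_1:=(X,Y)_{\theta_1,p_1}\in J_{\theta_1}(X,Y)\cap K_{\theta_1}(X,Y),
\end{equation*}
so the hypotheses of the Reiteration Theorem are met. Feeding these $E_0,E_1$ into the Reiteration Theorem yields
\begin{equation*}
(E_0,E_1)_{\theta,p}=(X,Y)_{(1-\theta)\theta_0+\theta\theta_1,p},
\end{equation*}
with equivalent norms, which is exactly the claimed identity.

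There is no real obstacle here: the entire content of the corollary is contained in the Reiteration Theorem, and the only verification required is the stability property $(X,Y)_{\theta,p}\in J_\theta\cap K_\theta$, which has already been recorded in the text. Consequently the proof reduces to two lines citing those two results, and no new estimates or calculations are needed.
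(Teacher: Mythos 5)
Your proposal is correct and is exactly the argument the paper intends: the paper states the corollary as an immediate consequence of the Reiteration Theorem together with the previously recorded fact that $(X,Y)_{\theta,p}\in J_\theta(X,Y)\cap K_\theta(X,Y)$, which follows from Lemma \ref{lem_Properties_interpolation_spaces} iv) and the characterization of $J_\theta$ and $K_\theta$. No gaps; your two-line reduction matches the paper's (unwritten) proof.
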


\section{Interpolating between domains of powers of operators}

In this section we consider the particular class of unbounded right linear operators $T$ in right Banach spaces $X$, given in Definition \ref{defi_Operators_of_minimal_growth}. First, Theorem \ref{thm_X_DTn_characterization} characterizes the interpolation spaces $(X,D(T^n))_{\theta,p}$ in terms of estimates of the resolvent operator \eqref{Eq_Q_operator}. Secondly, in Theorem \ref{thm_J_intermediate} and Theorem \ref{thm_K_intermediate} we prove that for $n<k<m\in \mathbb{N}_0$, the domain $D(T^k)$ is an intermediate space between $D(T^n)$ and $D(T^m)$ in the sense of Definition \ref{defi_Intermediate_spaces}.

\medskip

While all the results in Section \ref{sec_Interpolation_theory_of_quaternionic_Banach_spaces} are mainly copies of their analogs in complex interpolation theory, a significant difference arises in the definition of quaternionic spectra and resolvent operators. In the complex case the spectral problem is related to the bounded invertibility of the operator $\lambda-A$, for $\lambda\in\mathbb{C}$. For bounded operators $A\in\mathcal{B}(X_{\mathbb{C}})$, this inverse is for example given by the series
\begin{equation}\label{Eq_Resolvent_complex}
\sum\limits_{n=0}^\infty A^n\lambda^{-n-1}=(\lambda-A)^{-1},\qquad|\lambda|>\Vert A\Vert.
\end{equation}
However, for quaternionic operators $T\in\mathcal{B}(X)$ and quaternionic numbers $s\in\mathbb{H}$, the explicit form of this series is given by
\begin{equation}\label{Eq_Resolvent_quaternionic}
\sum\limits_{n=0}^\infty T^ns^{-n-1}=(T^2-2s_0T+|s|^2)^{-1}(\overline{s}-T),\qquad|s|>\Vert T\Vert,
\end{equation}
where the discrepancy between \eqref{Eq_Resolvent_complex} and \eqref{Eq_Resolvent_quaternionic} comes from the noncommutativity of quaternionic numbers and operators. Motivated by the explicit value of the series \eqref{Eq_Resolvent_quaternionic}, we now define the following notion of spectrum, for more details see \cite{CGK}.

\begin{definition}[$S$-spectrum]
Let $T:D(T)\subseteq X\rightarrow X$ be a closed right linear operator. For every $s\in\mathbb{H}$ we consider the operator
\begin{equation}\label{Eq_Q_operator}
Q_s(T):=T^2-2s_0T+|s|^2,\qquad\text{with }D(Q_s(T)):=D(T^2),
\end{equation}
and define the \textit{$S$-resolvent set} and the \textit{$S$-spectrum} as
\begin{equation*}
\rho_S(T):=\Set{s\in\mathbb{H} | Q_s^{-1}(T)\in\mathcal{B}(X)}\qquad\text{and}\qquad\sigma_S(T):=\mathbb{H}\setminus\rho_S(T).
\end{equation*}
\end{definition}

\begin{definition}\label{defi_Operators_of_minimal_growth}
Let $X$ be a right Banach space and $T:D(T)\subseteq X\rightarrow X$ a right linear, closed operator. We assume that there exists some $\omega\in[0,\pi]$, such that
\begin{equation}\label{Eq_Somega}
S_\omega:=\Set{te^{i\omega} | t>0,\,i\in\mathbb{S}}\subseteq\rho_S(T),
\end{equation}
and there exists some $M\geq 0$ such that the inverse of \eqref{Eq_Q_operator} is bounded by
\begin{equation}\label{Eq_Q_TQ_estimate}
\Vert Q_s^{-1}(T)\Vert\leq\frac{M}{|s|^2}\qquad\text{and}\qquad\Vert TQ_s^{-1}(T)\Vert\leq\frac{M}{|s|},\qquad s\in S_\omega.
\end{equation}
\end{definition}

The estimates \eqref{Eq_Q_TQ_estimate} can now be generalized also to other powers of $T$ and $Q_s^{-1}(T)$.

\begin{lemma}\label{lem_Q_resolvent_estimate}
Let $T$ be as in Definition \ref{defi_Operators_of_minimal_growth}. Then with the constant $M\geq 0$ from \eqref{Eq_Q_TQ_estimate} there holds for every $n,m\in\mathbb{N}_0$ with $n\leq 2m$ the estimate
\begin{equation}\label{Eq_Q_resolvent_estimate}
\Vert T^nQ_s^{-m}(T)\Vert\leq\frac{(1+3M)^m}{|s|^{2m-n}},\qquad s\in S_\omega.
\end{equation}
\end{lemma}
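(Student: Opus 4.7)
My plan is to prove the estimate \eqref{Eq_Q_resolvent_estimate} by induction on $m\in\mathbb{N}_0$, simultaneously over all admissible $n\in\{0,1,\dots,2m\}$. Throughout, the key structural observation is that $Q_s(T)=T^2-2s_0T+|s|^2$ is a polynomial in $T$ with real coefficients, hence $Q_s^{-1}(T)$ commutes with every power of $T$ on the appropriate domain; this legitimates all the factorizations appearing below.

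The base case $m=0$ is trivial since it forces $n=0$ and reduces to $\|I\|\leq 1$. For $m=1$ and $n\in\{0,1\}$ the bound is precisely the hypothesis \eqref{Eq_Q_TQ_estimate}, using that $M\leq 1+3M$. The first nontrivial point is the case $n=2$, $m=1$: from the identity $T^2=Q_s(T)+2s_0T-|s|^2$, right-multiplication by $Q_s^{-1}(T)$ yields
\begin{equation*}
T^2Q_s^{-1}(T)=I+2s_0TQ_s^{-1}(T)-|s|^2Q_s^{-1}(T),
\end{equation*}
so the triangle inequality together with $|s_0|\leq|s|$ and \eqref{Eq_Q_TQ_estimate} gives $\|T^2Q_s^{-1}(T)\|\leq 1+2M+M=1+3M$, as required.

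For the inductive step $m\rightsquigarrow m+1$, I assume \eqref{Eq_Q_resolvent_estimate} holds at level $m$ for every admissible $n$, and establish it at level $m+1$ by splitting into three subcases. If $n\leq 2m$, the factorization $T^nQ_s^{-(m+1)}(T)=\bigl(T^nQ_s^{-m}(T)\bigr)Q_s^{-1}(T)$ combined with the induction hypothesis and \eqref{Eq_Q_TQ_estimate} produces a constant $(1+3M)^m\cdot M\leq(1+3M)^{m+1}$ and the correct power $|s|^{-(2(m+1)-n)}$. If $n=2m+1$, the factorization $\bigl(T^{2m}Q_s^{-m}(T)\bigr)\bigl(TQ_s^{-1}(T)\bigr)$ together with the induction hypothesis at $n=2m$ and \eqref{Eq_Q_TQ_estimate} again yields constant $(1+3M)^mM\leq(1+3M)^{m+1}$ and decay $|s|^{-1}$. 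Finally, if $n=2m+2$, the factorization $\bigl(T^{2m}Q_s^{-m}(T)\bigr)\bigl(T^2Q_s^{-1}(T)\bigr)$, combined with the induction hypothesis at $n=2m$ and the already established base case $(m,n)=(1,2)$, yields constant $(1+3M)^m(1+3M)=(1+3M)^{m+1}$ with no decay, matching \eqref{Eq_Q_resolvent_estimate}.

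The only real obstacle is the nontrivial base case $n=2$, $m=1$, where one must rewrite $T^2$ via the defining identity of $Q_s(T)$; once this identity is in hand the remainder is a clean bookkeeping induction. The one point meriting mild care is the commutativity of $T$ with $Q_s^{-1}(T)$ used in the reshuffling of operator products in the inductive step; this follows from the fact that $Q_s(T)$ is a real-coefficient polynomial in $T$, so the reshuffling is legitimate on the domains at hand.
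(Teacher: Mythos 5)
Your proof is correct and rests on exactly the same ingredients as the paper's: the two hypotheses \eqref{Eq_Q_TQ_estimate} together with the identity $T^2Q_s^{-1}(T)=I+2s_0TQ_s^{-1}(T)-|s|^2Q_s^{-1}(T)$ giving $\Vert T^2Q_s^{-1}(T)\Vert\leq 1+3M$. The paper merely organizes the bookkeeping as a direct two-case factorization, $(TQ_s^{-1}(T))^nQ_s^{-(m-n)}(T)$ for $n\leq m$ and $(T^2Q_s^{-1}(T))^{n-m}(TQ_s^{-1}(T))^{2m-n}$ for $m<n\leq 2m$, instead of your induction on $m$, so the two arguments are essentially the same.
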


\begin{proof}
Let $s\in S_\omega$. We will repeatedly apply \eqref{Eq_Q_TQ_estimate}, but distinguish the cases:

\medskip

$\circ$\;\;In the case $0\leq n\leq m$, we get
\begin{equation*}
\Vert T^nQ_s^{-m}(T)\Vert=\Vert(TQ_s^{-1}(T))^nQ_s^{-(m-n)}(T)\Vert\leq\Big(\frac{M}{|s|}\Big)^n\Big(\frac{M}{|s|^2}\Big)^{m-n}=\frac{M^m}{|s|^{2m-n}}.
\end{equation*}
$\circ$\;\;In the case $m<n\leq 2m$ on the other hand, we get
\begin{align*}
\Vert T^nQ_s^{-m}(T)\Vert&=\big\Vert\big(T^2Q_s^{-1}(T)\big)^{n-m}\big(TQ_s^{-1}(T)\big)^{2m-n}\big\Vert \\
&=\big\Vert\big(1+2s_0TQ_s^{-1}(T)-|s|^2Q_s^{-1}(T)\big)^{n-m}\big(TQ_s^{-1}(T)\big)^{2m-n}\big\Vert \\
&\leq\Big(1+2|s_0|\frac{M}{|s|}+|s|^2\frac{M}{|s|^2}\Big)^{n-m}\Big(\frac{M}{|s|}\Big)^{2m-n}\leq\frac{(1+3M)^m}{|s|^{2m-n}}. \qedhere
\end{align*}
\end{proof}

Next we will prove that the powers $T^n$ are closed operators, i.e. $D(T^n)$ are Banach spaces, and that these domains are continuously embedded in one another. In particular this shows that any pair $D(T^n)$, $D(T^m)$ is an interpolation couple, and interpolating between these spaces makes sense.

\begin{lemma}\label{lem_DT_embedding}
Let $T$ be as in Definition \ref{defi_Operators_of_minimal_growth}. Then for every $n\in\mathbb{N}_0$ the power $T^n$ is a closed operator, and for every $n\leq m\in\mathbb{N}_0$ there holds
\begin{equation}\label{Eq_DT_embedding}
D(T^m)\hookrightarrow D(T^n).
\end{equation}
\end{lemma}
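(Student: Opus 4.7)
The plan is to first establish closedness of $T^n$ for every $n\in\mathbb{N}_0$, and then derive the continuous embedding $D(T^m)\hookrightarrow D(T^n)$ almost for free via the closed graph theorem. The set-theoretic inclusion $D(T^m)\subseteq D(T^n)$ is immediate from the iterative definition $D(T^n)=\{x\in D(T^{n-1}):T^{n-1}x\in D(T)\}$. Once both sides are Banach spaces in their graph norms, the inclusion map has closed graph (convergence in either graph norm forces convergence in $X$ to a unique limit), and the closed graph theorem delivers the continuity of the inclusion.

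For the closedness of $T^n$, I would exploit the bounded operator $Q_s^{-k}(T):=[Q_s^{-1}(T)]^k$ with $k\in\mathbb{N}$ chosen so that $2k\geq n$, for any fixed $s\in S_\omega$. This maps $X$ bijectively onto $D(T^{2k})=D(Q_s^k(T))$, and its inverse $Q_s^k(T):D(T^{2k})\to X$ is closed (as the inverse of a bounded bijection). By Lemma \ref{lem_Q_resolvent_estimate}, the composition $T^nQ_s^{-k}(T)$ is bounded on $X$. Given a sequence $(x_j)\subset D(T^n)$ with $x_j\to x$ and $T^nx_j\to y$ in $X$, applying the bounded operator $Q_s^{-k}(T)$ yields $Q_s^{-k}(T)T^nx_j\to Q_s^{-k}(T)y$. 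Using the commutation $Q_s^{-k}(T)T^n=T^nQ_s^{-k}(T)$ on $D(T^n)$, the left-hand side rewrites as $T^nQ_s^{-k}(T)x_j$, which by continuity of $T^nQ_s^{-k}(T)$ converges to $T^nQ_s^{-k}(T)x$. Hence $T^n(Q_s^{-k}(T)x)=Q_s^{-k}(T)y$, and since the right side lies in $D(T^{2k})$, this forces $Q_s^{-k}(T)x\in D(T^{n+2k})$. Applying $Q_s^k(T)$ together with the polynomial commutation $Q_s^k(T)T^n=T^nQ_s^k(T)$, which holds trivially on the regular domain $D(T^{n+2k})$, then yields $x\in D(T^n)$ and $T^nx=y$.

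The main obstacle will be rigorously establishing the commutation $Q_s^{-k}(T)T^n=T^nQ_s^{-k}(T)$ on $D(T^n)$. The formal polynomial identity is clear since $Q_s(T)$ is a polynomial in $T$, but converting it into a genuine operator equality on the full domain $D(T^n)$, rather than on a smaller regular subspace such as $D(T^{n+2k})$ where everything is defined unambiguously, requires care. The cleanest route is to verify the identity first on $D(T^{n+2k})$ by direct manipulation, and then extend by continuity: the composition $T^nQ_s^{-k}(T)$ is bounded on $X$ by Lemma \ref{lem_Q_resolvent_estimate}, while the left-hand side is the composition of the bounded $Q_s^{-k}(T)$ with $T^n|_{D(T^n)}$, so a density/approximation step should pass the identity to all of $D(T^n)$.
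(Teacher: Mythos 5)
Your route is genuinely different from the paper's (which factors $T^n=Q_s^n(T)(TQ_s^{-1}(T))^n$ for closedness and proves the embedding by expanding $x=(T^2+2s_0T+|s|^2)^mQ_s^{-m}(T)x$, obtaining an explicit constant via Lemma \ref{lem_Q_resolvent_estimate}), and the closed-graph-theorem part of your argument is unobjectionable: quaternionic right Banach spaces are in particular real Banach spaces, the set inclusion $D(T^m)\subseteq D(T^n)$ is trivial, and the inclusion map has closed graph between the two graph norms, so continuity follows --- at the price of losing the explicit constant, which the lemma does not claim anyway. The genuine gap is the step you yourself flag: the commutation $Q_s^{-k}(T)T^nx=T^nQ_s^{-k}(T)x$ for \emph{all} $x\in D(T^n)$, which your closedness argument needs for the arbitrary sequence $x_j\in D(T^n)$, is not proven, and the proposed density/continuity extension does not work under the standing hypotheses. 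Definition \ref{defi_Operators_of_minimal_growth} assumes no density of any domain; what you would need is density of $D(T^{n+2k})$ in $D(T^n)$ with respect to the graph norm of $T^n$, and the natural approximants $\lambda^{2j}Q_{\lambda e^{i\omega}}^{-j}(T)x$ can only be controlled by commuting $T^n$ through $Q_{\lambda e^{i\omega}}^{-j}(T)$ --- precisely the identity you are trying to establish --- and even then the convergence $\lambda^{2}Q_{\lambda e^{i\omega}}^{-1}(T)z\to z$ as $\lambda\to\infty$ is only available for $z\in\overline{D(T)}$, which $T^nx$ need not belong to. So, as written, the crucial step is circular or unsupported.

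The gap is repairable, and without any density argument: prove the commutation directly. For $x\in D(T)$ set $y:=Q_s^{-1}(T)x\in D(T^2)$; then $T^2y=x+2s_0Ty-|s|^2y\in D(T)$, hence $y\in D(T^3)$, and therefore the computation $Q_s(T)Ty=TQ_s(T)y=Tx$ is legitimate, giving $TQ_s^{-1}(T)x=Q_s^{-1}(T)Tx$ on $D(T)$. Iterating this argument yields $Q_s^{-1}(T)D(T^j)\subseteq D(T^{j+2})$ and then $T^nQ_s^{-k}(T)x=Q_s^{-k}(T)T^nx$ for all $x\in D(T^n)$, which is exactly what your sequential closedness proof requires; such commutation lemmas are standard in the quaternionic setting, see \cite{FJBOOK,CGK}, and note that the paper's own proof also uses them tacitly, e.g.\ when it writes $T^{n+2\alpha+\beta-m}Q_s^{-m}(T)T^mx$. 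With this lemma inserted in place of the density step, your proof is complete.
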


\begin{proof}
Since $\rho_S(T)\setminus\{0\}\neq\emptyset$, by the assumption \eqref{Eq_Somega}, let us choose some arbitrary $0\neq s\in\rho_S(T)$. For this $s$ we can write
\begin{equation}\label{Eq_DT_embedding_1}
T^n=T^nQ_s^n(T)Q_s^{-n}(T)=Q_s^n(T)(TQ_s^{-1}(T))^n.
\end{equation}
Now we observe that $TQ_s^{-1}(T)\in\mathcal{B}(X)$ is bounded, as the everywhere defined product of a closed and a bounded operator. Moreover, $Q_s^n(T)$ is closed, as it is the inverse of $Q_s^{-n}(T)\in\mathcal{B}(X)$. Consequently, the power $T^n$ in \eqref{Eq_DT_embedding_1} is the product of a closed and a bounded operator, and hence is closed.

\medskip

For the proof of the embedding \eqref{Eq_DT_embedding}, we use a similar formula as in \eqref{Eq_DT_embedding_1} to decompose every $x\in D(T^m)$ into
\begin{equation*}
x=(T^2+2s_0T+|s|^2)^mQ_s^{-m}(T)x=\sum\limits_{\stackrel{\alpha,\beta,\gamma=0}{\mathsmaller{\alpha+\beta+\gamma=m}}}^m\frac{m!}{\alpha!\beta!\gamma!}T^{2\alpha}(2s_0T)^\beta|s|^{2\gamma}Q_s^{-m}(T)x.
\end{equation*}
In this representation, we can use \eqref{Eq_Q_resolvent_estimate}, to estimate
\begin{align*}
\Vert T^nx\Vert&\leq\sum\limits_{\stackrel{\alpha,\beta,\gamma=0}{\mathsmaller{\alpha+\beta+\gamma=m,2\alpha+\beta<m-n}}}^m\frac{m!2^\beta|s|^{\beta+2\gamma}}{\alpha!\beta!\gamma!}\Vert T^{n+2\alpha+\beta}Q_s^{-m}(T)x\Vert \\
&\quad+\sum\limits_{\stackrel{\alpha,\beta,\gamma=0}{\mathsmaller{\alpha+\beta+\gamma=m,2\alpha+\beta\geq m-n}}}^m\frac{m!2^\beta|s|^{\beta+2\gamma}}{\alpha!\beta!\gamma!}\Vert T^{n+2\alpha+\beta-m}Q_s^{-m}(T)T^mx\Vert \\
&\leq\sum\limits_{\stackrel{\alpha,\beta,\gamma=0}{\mathsmaller{\alpha+\beta+\gamma=m}}}^m\frac{m!2^\beta|s|^n}{\alpha!\beta!\gamma!}(1+3M)^m\Vert x\Vert+\sum\limits_{\stackrel{\alpha,\beta,\gamma=0}{\mathsmaller{\alpha+\beta+\gamma=m}}}^m\frac{m!2^\beta}{\alpha!\beta!\gamma!}\frac{(1+3M)^m}{|s|^{m-n}}\Vert T^mx\Vert \\
&=(4+12M)^m|s|^n\Vert x\Vert+\frac{(4+12M)^m}{|s|^{m-n}}\Vert T^mx\Vert.
\end{align*}
This estimate then shows the embedding
\begin{equation*}
\Vert x\Vert_{D(T^n)}\leq\max\Big\{1+(4+12M)^m|s|^n,\frac{(4+12M)^m}{|s|^{m-n}}\Big\}\Vert x\Vert_{D(T^m)}. \qedhere
\end{equation*}
\end{proof}

The following theorem characterizes the interpolation spaces between $X$ and the domains $D(T^n)$, in terms of the pseudo $S$-resolvent operator, that is the inverse of the operator $Q_s(T)$ defined in \eqref{Eq_Q_operator}. This result is the quaternionic analog, but also a generalization, of the complex results in \cite[Proposition 3.1.1 \& Proposition 3.1.6]{Ale_INTER}. The major difference between the complex and the quaternionic case is that the function $\psi_x$ in \eqref{Eq_psix} has an additional term for odd values $n$. This is mainly due to the second order resolvent $Q_s^{-1}(T)$.

\begin{theorem}\label{thm_X_DTn_characterization}
Let $T$ be as in Definition \ref{defi_Operators_of_minimal_growth} and $n\in\mathbb{N}$. For every $x\in X$ let
\begin{equation}\label{Eq_psix}
\psi_x(t):=\begin{cases} \Vert T^nQ_{te^{i\omega}}^{-\frac{n}{2}}(T)x\Vert, & \text{n } \text{ even,} \\ \Vert T^{n+1}Q_{te^{i\omega}}^{-\frac{n+1}{2}}(T)x\Vert+t\Vert T^nQ_{te^{i\omega}}^{-\frac{n+1}{2}}(T)x\Vert, & \text{n } \text{odd,} \end{cases}\qquad t>0.
\end{equation}
Then, for every $\theta\in(0,1)$ and $p\in[1,\infty]$ we obtain the representation
\begin{equation*}
(X,D(T^n))_{\theta,p}=\Set{x\in X | t\mapsto t^{n\theta}\psi_x(t)\in L^p_*(0,\infty)}
\end{equation*}
of the interpolation space, and the norms
\begin{equation*}
\Vert x\Vert_{\theta,p}\quad\text{and}\quad\Vert x\Vert_{\theta,p}^*:=\Vert x\Vert+\Vert t^{n\theta}\psi_x(t)\Vert_{L^p_*(0,\infty)}\quad\text{are equivalent.}
\end{equation*}
\end{theorem}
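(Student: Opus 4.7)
The plan is to reduce the theorem, via the change of variables $\tau=1/t^n$ together with the scaling identity \eqref{Eq_Lpstar_symmetry}, to showing that $K(1/t^n,x)$ and $\psi_x(t)$ are equivalent modulo a contribution of the form $\|x\|/t^n$. Such a contribution is harmless after $L^p_*$-integration: since $K(\tau,x)\le\|x\|$ holds for every $\tau>0$, on $(0,1]$ we may simply bound $t^{n\theta}K(1/t^n,x)\le t^{n\theta}\|x\|$, which belongs to $L^p_*((0,1],dt/t)$ thanks to $n\theta>0$, while on $[1,\infty)$ the term $\|x\|/t^n$ contributes an integrable tail because $\theta<1$. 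The residual $\|x\|$-mass is absorbed by the explicit $\|x\|$ summand in the definition of $\|x\|_{\theta,p}^*$.

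For the inequality $\psi_x(t)\le CK(1/t^n,x)$, I would take an arbitrary decomposition $x=a+b$ with $a\in X$ and $b\in D(T^n)$ and apply Lemma \ref{lem_Q_resolvent_estimate} termwise. In the $n$ even case this gives
\[ \|T^nQ_{te^{i\omega}}^{-n/2}(T)x\|\le\|T^nQ_s^{-n/2}(T)\|\,\|a\|+\|Q_s^{-n/2}(T)\|\,\|T^nb\|\le C(\|a\|+t^{-n}\|T^nb\|), \]
and in the $n$ odd case the same bound arises by applying the argument separately to $\|T^{n+1}Q_s^{-(n+1)/2}x\|$ and to $t\|T^nQ_s^{-(n+1)/2}x\|$. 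Taking the infimum over decompositions and using $\|T^nb\|\le\|b\|_{D(T^n)}$ then yields the desired estimate.

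For the upper bound I would construct, for each $t>0$ with $s=te^{i\omega}$, an explicit decomposition by expanding $x=Q_s^n(T)Q_s^{-n}(T)x$ via the multinomial theorem exactly as in the proof of Lemma \ref{lem_DT_embedding}. Collecting the terms $c_{\alpha\beta\gamma}T^{2\alpha+\beta}Q_s^{-n}(T)x$ with $2\alpha+\beta>n$ into $a(t)\in X$ and the remaining ones (automatically in $D(T^n)$ since $Q_s^{-n}(T)x\in D(T^{2n})$) into $b(t)$, the task reduces to proving
\[ \|a(t)\|\le C\psi_x(t),\qquad\|b(t)\|\le C\|x\|,\qquad\|T^nb(t)\|\le Ct^n\psi_x(t), \]
which combine to yield $K(1/t^n,x)\le\|a(t)\|+(1/t^n)\|b(t)\|_{D(T^n)}\le C(\psi_x(t)+\|x\|/t^n)$. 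Each summand is handled by refactoring it appropriately through $Q_s^{-n/2}(T)$ for $n$ even, or through $Q_s^{-(n+1)/2}(T)$ for $n$ odd, and applying Lemma \ref{lem_Q_resolvent_estimate} to the resulting bounded-operator factors; the powers of $t$ from $|c_{\alpha\beta\gamma}|\lesssim t^{\beta+2\gamma}$ then cancel against the $t$-decay produced by the Lemma thanks to the identity $2(\alpha+\beta+\gamma)=2n$.

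The main technical hurdle is the odd-$n$ case. Since $n/2$ is no longer an integer, the clean factorisation through $Q_s^{-n/2}$ is unavailable, and one must thread every estimate through $Q_s^{-(n+1)/2}$ instead. The extra ``half-power'' of $Q_s^{-1}$ this introduces is precisely what forces the two-term structure of $\psi_x$ in \eqref{Eq_psix}: the pure norm $\|T^{n+1}Q_s^{-(n+1)/2}x\|$ absorbs the summands where an additional factor of $T$ is required, while the $t$-weighted norm $t\|T^nQ_s^{-(n+1)/2}x\|$ compensates the missing $1/t$ in the remaining summands, restoring the correct dimensional balance throughout.
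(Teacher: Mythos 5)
Your proposal is correct and follows essentially the same route as the paper: the lower bound $\psi_x(t)\leq C\,K(1/t^n,x)$ obtained from an arbitrary decomposition $x=a+b$ via Lemma \ref{lem_Q_resolvent_estimate}, the upper bound $K(1/t^n,x)\leq C(\psi_x(t)+\Vert x\Vert/t^n)$ obtained from the multinomial splitting of $x=Q_s^n(T)Q_s^{-n}(T)x$ at the threshold $2\alpha+\beta>n$ with each summand refactored through $Q_s^{-n/2}(T)$ for even $n$ (resp.\ through $Q_s^{-(n\pm1)/2}(T)$ for odd $n$, which is exactly why both terms of \eqref{Eq_psix} are needed), together with the scaling identity \eqref{Eq_Lpstar_symmetry} and the trivial bound $K(1/t^n,x)\leq\Vert x\Vert$ on $0<t\leq1$. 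The only step you leave implicit is that in the direction $\Vert x\Vert_{\theta,p}^*\lesssim\Vert x\Vert_{\theta,p}$ the summand $\Vert x\Vert$ must itself be controlled, which the paper gets from the embedding $(X,D(T^n))_{\theta,p}\hookrightarrow X+D(T^n)=X$ via Lemma \ref{lem_Properties_interpolation_spaces} iv) and Lemma \ref{lem_DT_embedding}.
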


Note that the operator $Q_{te^{i\omega}}^{-1}(T)=(T^2+2tT\cos\omega+t^2)^{-1}$ does not depend on the imaginary unit $i\in\mathbb{S}$.

\begin{proof}
For the embedding $\text{\grqq}\hookrightarrow\text{\grqq}$ let $x\in(X,D(T^n))_{\theta,p}$. For any $t>0$ and any decomposition $x=a+b$ with $a\in X$, $b\in D(T^n)$, we want to estimate $\psi_x(t)$.

\medskip

$\circ$\;\;If $n$ is even, we use \eqref{Eq_Q_resolvent_estimate}, to estimate
\begin{equation}\label{Eq_X_DT_interpolation_1}
\psi_x(t)\leq\Vert T^nQ_{te^{i\omega}}^{-\frac{n}{2}}(T)a\Vert+\Vert Q_{te^{i\omega}}^{-\frac{n}{2}}(T)T^nb\Vert\leq(1+3M)^{\frac{n}{2}}\Big(\Vert a\Vert+\frac{\Vert T^nb\Vert}{t^n}\Big).
\end{equation}
$\circ$\;\;If $n$ is odd, we similarly get
\begin{align}
\psi_x(T)&=\Vert T^{n+1}Q_{te^{i\omega}}^{-\frac{n+1}{2}}(T)x\Vert+t\Vert T^nQ_{te^{i\omega}}^{-\frac{n+1}{2}}(T)x\Vert \notag \\
&\leq(1+3M)^{\frac{n+1}{2}}\Big(\Vert a\Vert+\frac{\Vert T^nb\Vert}{t^n}+t\frac{\Vert a\Vert}{t}+t\frac{\Vert T^nb\Vert}{t^{n+1}}\Big) \notag \\
&=2(1+3M)^{\frac{n+1}{2}}\Big(\Vert a\Vert+\frac{\Vert T^nb\Vert}{t^n}\Big). \label{Eq_X_DT_interpolation_2}
\end{align}
Since the estimates \eqref{Eq_X_DT_interpolation_1} and \eqref{Eq_X_DT_interpolation_2} hold true for every decomposition $x=a+b$, we can take the respective infimum of the right hand side. Hence, for every $n\in\mathbb{N}$ we end up with an estimate of the $K$-functional \eqref{Eq_K_functional}, namely
\begin{equation*}
\psi_x(t)\leq 2(1+M)^{\lceil\frac{n}{2}\rceil}K\Big(\frac{1}{t^n},x\Big),\qquad t>0,
\end{equation*}
where we used the ceil function $\lceil\frac{n}{2}\rceil$, which gives the smallest integer larger or equal to $\frac{n}{2}$. If we multiply this inequality by $t^{n\theta}$ and take the $\Vert\cdot\Vert_{L^p_*(0,\infty)}$-norm, we get
\begin{align}
\Vert t^{n\theta}\psi_x(t)\Vert_{L^p_*(0,\infty)}&\leq 2(1+M)^{\lceil\frac{n}{2}\rceil}\Big\Vert t^{n\theta}K\Big(\frac{1}{t^n},x\Big)\Big\Vert_{L^p_*(0,\infty)} \notag \\
&=\frac{2(1+M)^{\lceil\frac{n}{2}\rceil}}{n^{\frac{1}{p}}}\Vert t^{-\theta}K(t,x)\Vert_{L^p_*(0,\infty)}=\frac{2(1+M)^{\lceil\frac{n}{2}\rceil}}{n^{\frac{1}{p}}}\Vert x\Vert_{\theta,p}, \label{Eq_X_DT_interpolation_3}
\end{align}
where in the second line we used \eqref{Eq_Lpstar_symmetry} with the convention $n^{\frac{1}{\infty}}:=1$. Moreover, by Lemma \ref{lem_Properties_interpolation_spaces} iv) and Lemma \ref{lem_DT_embedding} we have
\begin{equation*}
(X,D(T^n))_{\theta,p}\hookrightarrow X+D(T^n)\hookrightarrow X+X=X.
\end{equation*}
Hence $\Vert x\Vert\leq C\Vert x\Vert_{\theta,p}$ for some $C\geq 0$, and together with \eqref{Eq_X_DT_interpolation_3} we get
\begin{equation*}
\Vert x\Vert_{\theta,p}^*=\Vert x\Vert+\Vert t^{n\theta}\psi_x(t)\Vert_{L^p_*(0,\infty)}\leq\Big(C+\frac{2(1+M)^{\lceil\frac{n}{2}\rceil}}{n^{\frac{1}{p}}}\Big)\Vert x\Vert_{\theta,p}.
\end{equation*}
For the embedding $\text{\grqq}\hookleftarrow\text{\grqq}$ let $x\in X$ with $\psi_x\in L^p_*(0,\infty)$. For $0<t\leq 1$, we choose $a=x$ and $b=0$ in the definition \eqref{Eq_K_functional} of the $K$-functional, and get
\begin{equation}\label{Eq_X_DT_interpolation_9}
K\Big(\frac{1}{t^n},x\Big)\leq\Vert a\Vert+\frac{1}{t^n}\Vert b\Vert_{D(T^n)}=\Vert x\Vert,\qquad 0<t\leq 1.
\end{equation}
For $t\geq 1$ on the other hand, we distinguish two cases.

\medskip

$\circ$\;\;If $n$ is even, we decompose the vector $x$ into
\begin{align}
&x=(T^2+2tT\cos\omega+t^2)^nQ_{te^{i\omega}}^{-n}(T)x=\sum\limits_{\stackrel{\alpha,\beta,\gamma=0}{\mathsmaller{\alpha+\beta+\gamma=n}}}^n\frac{n!t^{2\gamma}}{\alpha!\beta!\gamma!}T^{2\alpha}(2tT\cos\omega)^\beta Q_{te^{i\omega}}^{-n}(T)x \notag \\
&=\underbrace{\sum\limits_{\stackrel{\alpha,\beta,\gamma=0}{\mathsmaller{\stackrel{\alpha+\beta+\gamma=n}{2\alpha+\beta\geq n+1}}}}^n\frac{n!(2\cos\omega)^\beta}{\alpha!\beta!\gamma!t^{-\beta-2\gamma}}T^{2\alpha+\beta}Q_{te^{i\omega}}^{-n}(T)x}_{=:a\in X}+\underbrace{\sum\limits_{\stackrel{\alpha,\beta,\gamma=0}{\mathsmaller{\stackrel{\alpha+\beta+\gamma=n}{2\alpha+\beta\leq n}}}}^n\frac{n!(2\cos\omega)^\beta}{\alpha!\beta!\gamma!t^{-\beta-2\gamma}}T^{2\alpha+\beta}Q_{te^{i\omega}}^{-n}(T)x.}_{=:b\in D(T^n)} \label{Eq_X_DT_interpolation_4}
\end{align}
With this decomposition and the inequality \eqref{Eq_Q_resolvent_estimate}, we can estimate the $K$-functional
\begin{align}
K\Big(\frac{1}{t^n},x\Big)&\leq\sum\limits_{\stackrel{\alpha,\beta,\gamma=0}{\mathsmaller{\stackrel{\alpha+\beta+\gamma=n}{2\alpha+\beta\geq n+1}}}}^n\frac{n!2^\beta}{\alpha!\beta!\gamma!}\frac{\Vert T^{2\alpha+\beta-n}Q_{te^{i\omega}}^{-\frac{n}{2}}(T)T^nQ_{te^{i\omega}}^{-\frac{n}{2}}(T)x\Vert}{t^{-\beta-2\gamma}} \notag \\
&+\sum\limits_{\stackrel{\alpha,\beta,\gamma=0}{\mathsmaller{\stackrel{\alpha+\beta+\gamma=n}{2\alpha+\beta\leq n}}}}^n\frac{n!2^\beta}{\alpha!\beta!\gamma!}\frac{\Vert T^{2\alpha+\beta}Q_{te^{i\omega}}^{-\frac{n}{2}}(T)T^nQ_{te^{i\omega}}^{-\frac{n}{2}}(T)x\Vert+\Vert T^{2\alpha+\beta}Q_{te^{i\omega}}^{-n}(T)x\Vert}{t^{n-\beta-\gamma}} \notag \\
&\leq\sum\limits_{\stackrel{\alpha,\beta,\gamma=0}{\mathsmaller{\stackrel{\alpha+\beta+\gamma=n}{2\alpha+\beta\geq n+1}}}}^n\frac{n!2^\beta(1+3M)^{\frac{n}{2}}}{\alpha!\beta!\gamma!}\Vert T^nQ_{te^{i\omega}}^{-\frac{n}{2}}(T)x\Vert \notag \\
&\quad+\sum\limits_{\stackrel{\alpha,\beta,\gamma=0}{\mathsmaller{\stackrel{\alpha+\beta+\gamma=n}{2\alpha+\beta\leq n}}}}^n\frac{n!2^\beta}{\alpha!\beta!\gamma!}\Big((1+3M)^{\frac{n}{2}}\Vert T^nQ_{te^{i\omega}}^{-\frac{n}{2}}(T)x\Vert+(1+3M)^n\frac{\Vert x\Vert}{t^n}\Big) \notag \\
&\leq\sum\limits_{\stackrel{\alpha,\beta,\gamma=0}{\mathsmaller{\alpha+\beta+\gamma=n}}}^n\frac{n!2^\beta}{\alpha!\beta!\gamma!}(1+3M)^{\frac{n}{2}}\psi_x(t)+\sum\limits_{\stackrel{\alpha,\beta,\gamma=0}{\mathsmaller{\alpha+\beta+\gamma=n}}}^n\frac{n!2^\beta}{\alpha!\beta!\gamma!}(1+3M)^n\frac{\Vert x\Vert}{t^n} \notag \\
&\leq 4^n(1+3M)^{\frac{n}{2}}\psi_x(t)+4^n(1+3M)^n\frac{\Vert x\Vert}{t^n}, \label{Eq_X_DT_interpolation_6}
\end{align}
where in the second last line we inserted the definition \eqref{Eq_psix} of $\psi_x(t)$ and in the last inequality we solved the trinomial sum $\sum\limits_{\stackrel{\alpha,\beta,\gamma=0}{\mathsmaller{\alpha+\beta+\gamma=n}}}^n\frac{n!2^\beta}{\alpha!\beta!\gamma!}=4^n$.

\medskip

$\circ$\;\;If $n$ is odd we again use the same decomposition $x=a+b$ as in \eqref{Eq_X_DT_interpolation_4}, and similarly estimate the $K$-functional by
\begin{align}
K\Big(\frac{1}{t^n},x\Big)&\leq\sum\limits_{\stackrel{\alpha,\beta,\gamma=0}{\mathsmaller{\stackrel{\alpha+\beta+\gamma=n}{2\alpha+\beta\geq n+1}}}}^n\frac{n!2^\beta}{\alpha!\beta!\gamma!}\frac{\Vert T^{2\alpha+\beta-n-1}Q^{-\frac{n-1}{2}}(T)T^{n+1}Q^{-\frac{n+1}{2}}(T)x\Vert}{t^{-\beta-2\gamma}} \notag \\
&+\sum\limits_{\stackrel{\alpha,\beta,\gamma=0}{\mathsmaller{\stackrel{\alpha+\beta+\gamma=n}{2\alpha+\beta\leq n}}}}^n\frac{n!2^\beta}{\alpha!\beta!\gamma!}\frac{\Vert T^{2\alpha+\beta}Q^{-\frac{n-1}{2}}(T)T^nQ^{-\frac{n+1}{2}}(T)x\Vert+\Vert T^{2\alpha+\beta}Q^{-n}(T)x\Vert}{t^{n-\beta-2\gamma}} \notag \\
&\leq\sum\limits_{\stackrel{\alpha,\beta,\gamma=0}{\mathsmaller{\stackrel{\alpha+\beta+\gamma=n}{2\alpha+\beta\geq n+1}}}}^n\frac{n!2^\beta(1+3M)^{\frac{n-1}{2}}}{\alpha!\beta!\gamma!}\Vert T^{n+1}Q^{-\frac{n+1}{2}}(T)x\Vert \notag \\
&\quad+\sum\limits_{\stackrel{\alpha,\beta,\gamma=0}{\mathsmaller{\stackrel{\alpha+\beta+\gamma=n}{2\alpha+\beta\leq n}}}}^n\frac{n!2^\beta}{\alpha!\beta!\gamma!}\Big((1+3M)^{\frac{n-1}{2}}t\Vert T^nQ^{-\frac{n+1}{2}}(T)x\Vert+(1+3M)^n\frac{\Vert x\Vert}{t^n}\Big) \notag \\
&\leq\sum\limits_{\stackrel{\alpha,\beta,\gamma=0}{\mathsmaller{\alpha+\beta+\gamma=n}}}^n\frac{n!2^\beta}{\alpha!\beta!\gamma!}(1+3M)^{\frac{n-1}{2}}\psi_x(t)+\sum\limits_{\stackrel{\alpha,\beta,\gamma=0}{\mathsmaller{\alpha+\beta+\gamma=n}}}^n\frac{n!2^\beta}{\alpha!\beta!\gamma!}(1+3M)^n\frac{\Vert x\Vert}{t^n} \notag \\
&\leq 4^n(1+3M)^{\frac{n-1}{2}}\psi_x(t)+(1+3M)^n\frac{\Vert x\Vert}{t^n}. \label{Eq_X_DT_interpolation_7}
\end{align}
The estimates \eqref{Eq_X_DT_interpolation_6} and \eqref{Eq_X_DT_interpolation_7} now show that the $K$-functional admits for some $M_1,M_2\geq 0$ and every $n\in\mathbb{N}$ the upper bound
\begin{equation}\label{Eq_X_DT_interpolation_8}
K\Big(\frac{1}{t^n},x\Big)\leq M_1\psi_x(t)+M_2\frac{\Vert x\Vert}{t^n},\qquad t\geq 1.
\end{equation}
The inequalities \eqref{Eq_X_DT_interpolation_9} and \eqref{Eq_X_DT_interpolation_8}, together with the identity \eqref{Eq_Lpstar_symmetry}, now give
\begin{align*}
\Vert x\Vert_{\theta,p}&=\Vert t^{-\theta}K(t,x)\Vert_{L^p_*(0,\infty)}=n^{\frac{1}{p}}\Big\Vert t^{n\theta}K\Big(\frac{1}{t^n},x\Big)\Big\Vert_{L^p_*(0,\infty)} \\
&\leq n^{\frac{1}{p}}\Big\Vert t^{n\theta}K\Big(\frac{1}{t^n},x\Big)\Big\Vert_{L^p_*(0,1)}+n^{\frac{1}{p}}\Big\Vert t^{n\theta}K\Big(\frac{1}{t^n},x\Big)\Big\Vert_{L^p_*(1,\infty)} \\
&\leq n^{\frac{1}{p}}\Vert x\Vert\Vert t^{n\theta}\Vert_{L^p_*(0,1)}+M_1n^{\frac{1}{p}}\Vert t^{n\theta}\psi_x(t)\Vert_{L^p_*(1,\infty)}+M_2n^{\frac{1}{p}}\Vert x\Vert\Vert t^{n(\theta-1)}\Vert_{L^p_*(1,\infty)} \\
&=\frac{\Vert x\Vert}{p^{\frac{1}{p}}}\Big(\frac{1}{\theta^{\frac{1}{p}}}+\frac{M_2}{(1-\theta)^{\frac{1}{p}}}\Big)+M_1n^{\frac{1}{p}}\Vert t^{n\theta}\psi_x(t)\Vert_{L^p_*(1,\infty)} \\
&\leq\max\bigg\{\frac{1}{p^{\frac{1}{p}}}\Big(\frac{1}{\theta^{\frac{1}{p}}}+\frac{M_2}{(1-\theta)^{\frac{1}{p}}}\Big),M_1n^{\frac{1}{p}}\bigg\}\Vert x\Vert_{\theta,p}^*. \qedhere
\end{align*}
\end{proof}

The upcoming two Theorems \ref{thm_J_intermediate} \& \ref{thm_K_intermediate} show that the domain $D(T^k)$ is an intermediate space between $D(T^n)$ and $D(T^m)$ according to Definition \ref{defi_Intermediate_spaces}.

\begin{theorem}\label{thm_J_intermediate}
Let $T$ be as in Definition \ref{defi_Operators_of_minimal_growth}. Then for every $n<k<m\in\mathbb{N}_0$, there holds
\begin{equation*}
D(T^k)\in J_{\frac{k-n}{m-n}}(D(T^n),D(T^m));
\end{equation*}
i.e. according to Definition \ref{defi_Intermediate_spaces} there exists some $C_{n,k,m}\geq 0$, such that
\begin{equation*}
\Vert x\Vert_{D(T^k)}\leq C_{n,k,m}\Vert x\Vert_{D(T^n)}^{\frac{m-k}{m-n}}\Vert x\Vert_{D(T^m)}^{\frac{k-n}{m-n}},\qquad x\in D(T^m).
\end{equation*}
\end{theorem}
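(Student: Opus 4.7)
The strategy is to reduce the multiplicative interpolation inequality to a one-parameter additive one, and then optimize. Concretely, the first step is to establish that, for all $t>0$ and $x\in D(T^m)$,
\begin{equation}\label{EqPlanAdditive}
\Vert T^k x\Vert \leq C\,t^{k-n}\Vert T^n x\Vert + C\,t^{k-m}\Vert T^m x\Vert
\end{equation}
for some constant $C\geq 0$ depending only on $M,n,m$.

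To prove \eqref{EqPlanAdditive}, I would mimic the argument of Lemma \ref{lem_DT_embedding} but apply it to the vector $y:=T^nx\in D(T^{m-n})$, with the exponents $n$ and $m$ of that lemma replaced by $k-n$ and $m-n$ respectively. Picking any $s\in S_\omega$ with $|s|=t$, one writes $y=Q_s^{m-n}(T)Q_s^{-(m-n)}(T)y$, expands the polynomial $Q_s^{m-n}(T)$ via the multinomial theorem, and then applies $T^{k-n}$. Splitting the resulting sum according to whether the resulting power of $T$ exceeds $m-n$ (so that the excess is absorbed by commuting $T^{m-n}$ past $Q_s^{-(m-n)}(T)$ to produce a factor $T^{m-n}y=T^mx$) or does not (so that the term is estimated directly against $\Vert y\Vert=\Vert T^nx\Vert$), and invoking the resolvent bound \eqref{Eq_Q_resolvent_estimate} on each summand, yields \eqref{EqPlanAdditive}.

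The second step is to optimize \eqref{EqPlanAdditive} over $t>0$. Balancing the two summands by choosing $t=(\Vert T^m x\Vert/\Vert T^n x\Vert)^{1/(m-n)}$ gives
\begin{equation*}
\Vert T^k x\Vert \leq 2C\,\Vert T^n x\Vert^{\frac{m-k}{m-n}}\Vert T^m x\Vert^{\frac{k-n}{m-n}}.
\end{equation*}
The degenerate cases $\Vert T^n x\Vert=0$ and $\Vert T^m x\Vert=0$ are handled by sending $t\to\infty$ or $t\to 0^+$ in \eqref{EqPlanAdditive}, which in either case forces $T^kx=0$ (using $k>n$ and $k<m$, respectively). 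Combining this with the trivial bound $\Vert x\Vert\leq\Vert x\Vert_{D(T^n)}^{\frac{m-k}{m-n}}\Vert x\Vert_{D(T^m)}^{\frac{k-n}{m-n}}$ together with the obvious estimates $\Vert T^n x\Vert\leq\Vert x\Vert_{D(T^n)}$ and $\Vert T^m x\Vert\leq\Vert x\Vert_{D(T^m)}$ yields the assertion with $C_{n,k,m}:=1+2C$.

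The main obstacle is the bookkeeping in step one: one must verify that after applying $T^{k-n}$ to the multinomial expansion of $Q_s^{m-n}(T)Q_s^{-(m-n)}(T)T^nx$, every resulting summand is either of the form $t^{\beta+2\gamma}\,T^jQ_s^{-(m-n)}(T)T^nx$ with $j\leq 2(m-n)$ and total $t$-weight $t^{k-n}$ (handled by \eqref{Eq_Q_resolvent_estimate}, producing the first summand of \eqref{EqPlanAdditive}), or of the form $t^{\beta+2\gamma-(m-n)}\,T^jQ_s^{-(m-n)}(T)T^mx$ with $j\leq 2(m-n)$ and total $t$-weight $t^{k-m}$ (producing the second summand). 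This is a direct adaptation of the computation already carried out in Lemma \ref{lem_DT_embedding} with all exponents shifted by $n$, and relies on the fact that $T$ commutes with $Q_s^{-1}(T)$ on its natural domain, which is a straightforward consequence of $Q_s(T)$ being a polynomial in $T$.
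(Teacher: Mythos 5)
Your proposal is correct, but it takes a genuinely different route from the paper's proof. The paper first reduces the graph-norm estimate to the core inequality \eqref{Eq_J_1} via the elementary bound $c+a^{1-\theta}b^\theta\leq(c+a)^{1-\theta}(c+b)^\theta$, and then proves \eqref{Eq_J_1} by induction on $m$: the base case $m=k+1$ is obtained by integrating the derivative of $f_x(t)=t^{2(k-n)}Q_{te^{i\omega}}^{-(k-n)}(T)x$ over $[\lambda,\infty)$ (using $\lim_{t\rightarrow\infty}t^2Q_{te^{i\omega}}^{-1}(T)y=y$ for $y\in D(T)$), applying the closed operator $T^k$ under the integral, and minimizing over $\lambda$. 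You instead establish, for every $t>0$, the additive bound $\Vert T^kx\Vert\leq Ct^{k-n}\Vert T^nx\Vert+Ct^{k-m}\Vert T^mx\Vert$ --- which is in effect the final estimate in the proof of Lemma \ref{lem_DT_embedding} applied to $y=T^nx$ with the exponents $(n,m)$ replaced by $(k-n,m-n)$ --- and then optimize over $t$. Your bookkeeping works: with $j=k-n+2\alpha+\beta$ one has $j\leq 3(m-n)$ since $k<m$, the terms with $j\leq m-n$ are handled by \eqref{Eq_Q_resolvent_estimate} with total weight $t^{k-n}$, and the terms with $j>m-n$ become, after commuting $T^{m-n}$ past $Q_{te^{i\omega}}^{-(m-n)}(T)$, terms of order $j-(m-n)\leq 2(m-n)$ acting on $T^mx$ with total weight $t^{k-m}$; the balancing choice $t=(\Vert T^mx\Vert/\Vert T^nx\Vert)^{1/(m-n)}$ and the limiting arguments in the degenerate cases are sound, and your way of absorbing $\Vert x\Vert$ (bounding it by the geometric mean of the graph norms) replaces the paper's convexity inequality equally well. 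What each approach buys: yours is uniform in $m$ (no induction), purely algebraic, and reuses the multinomial-splitting machinery the paper already deploys in Lemma \ref{lem_DT_embedding} and Theorems \ref{thm_X_DTn_characterization} and \ref{thm_K_intermediate}; the paper's argument needs only the one-parameter family $Q_{te^{i\omega}}^{-(k-n)}(T)$ and yields the convergence $t^2Q_{te^{i\omega}}^{-1}(T)y\to y$ as a by-product, at the cost of the induction and of justifying differentiation and integration of the operator family. Two cosmetic slips you may want to fix: in the degenerate cases the parenthetical conditions are swapped ($\Vert T^nx\Vert=0$ with $t\to\infty$ uses $k<m$, while $\Vert T^mx\Vert=0$ with $t\to 0^+$ uses $k>n$), and in your bookkeeping paragraph the prefactor of the second type of summand should be $t^{\beta+2\gamma}$ rather than $t^{\beta+2\gamma-(m-n)}$, since the commutation does not change the power of $t$; the final weights $t^{k-n}$ and $t^{k-m}$ you state are nevertheless the correct ones.
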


\begin{proof}
Note that it is enough to prove that for some $C_{n,k,m}\geq 0$, there holds
\begin{equation}\label{Eq_J_1}
\Vert T^kx\Vert\leq C_{n,k,m}\Vert T^nx\Vert^{\frac{m-k}{m-n}}\Vert T^mx\Vert^{\frac{k-n}{m-n}},\qquad x\in D(T^m).
\end{equation}
Indeed, if \eqref{Eq_J_1} holds true, we get
\begin{align*}
\Vert x\Vert_{D(T^k)}&=\Vert x\Vert+\Vert T^kx\Vert\leq\Vert x\Vert+C_{n,k,m}\Vert T^nx\Vert^{\frac{m-k}{m-n}}\Vert T^mx\Vert^{\frac{k-n}{m-n}} \\
&\leq\max\{1,C_{n,k,m}\}\big(\Vert x\Vert+\Vert T^nx\Vert^{\frac{m-k}{m-n}}\Vert T^mx\Vert^{\frac{k-n}{m-n}}\big) \\
&\leq\max\{1,C_{n,k,m}\}\big(\Vert x\Vert+\Vert T^nx\Vert\big)^{\frac{m-k}{m-n}}\big(\Vert x\Vert+\Vert T^mx\Vert\big)^{\frac{k-n}{m-n}} \\
&=\max\{1,C_{n,k,m}\}\Vert x\Vert_{D(T^n)}^{\frac{m-k}{m-n}}\Vert x\Vert_{D(T^m)}^{\frac{k-n}{m-n}},
\end{align*}
where in the third line we used the basic inequality
\begin{equation*}
c+a^{1-\theta}b^\theta\leq(c+a)^{1-\theta}(c+b)^\theta,\qquad a,b,c\geq 0,\,\ \ \theta\in(0,1).
\end{equation*}
Note, that this inequality is trivial for $c=0$, but also for $c>0$ it is true since the $c$-derivative of the left side is always smaller than the one of the right side,
\begin{equation*}
\frac{d}{dc}(c+a^{1-\theta}b^\theta)=1\leq (1-\theta)\Big(\frac{c+b}{c+a}\Big)^\theta+\theta\Big(\frac{c+a}{c+b}\Big)^{1-\theta}=\frac{d}{dc}(c+a)^{1-\theta}(c+b)^\theta,
\end{equation*}
where the inequality comes from the convexity of the function $[0,1]\ni x\mapsto(\frac{c+b}{c+a})^x$.

\medskip

Hence it is left to prove is the inequality \eqref{Eq_J_1}, which will be done for fixed $n<k\in\mathbb{N}_0$ and by induction with respect to $m\in\{k+1,\dots\}$. For the induction start $m=k+1$, let $x\in D(T^{k+1})$ and consider the function
\begin{equation*}
f_x(t):=t^{2(k-n)}Q_{te^{i\omega}}^{-(k-n)}(T)x,\qquad t>0.
\end{equation*}
Thanks to Lemma \ref{lem_Q_resolvent_estimate} we compute the limit
\begin{align*}
\lim\limits_{t\rightarrow\infty}\Vert t^2Q_{te^{i\omega}}^{-1}(T)y-y\Vert&=\lim\limits_{t\rightarrow\infty}\Vert(T-2t\cos\omega)Q_{te^{i\omega}}^{-1}(T)Ty\Vert \\
&\leq\lim\limits_{t\rightarrow\infty}\frac{(1+3M)(1+2\cos\omega)}{t}\Vert Ty\Vert=0,\qquad y\in D(T),
\end{align*}
so there is also the convergence
\begin{equation}\label{Eq_J_2}
\lim\limits_{t\rightarrow\infty}f_x(t)=\lim\limits_{t\rightarrow\infty}(t^2Q_{te^{i\omega}}^{-1}(T))^{k-n}x=x.
\end{equation}
Secondly, the derivative of the function is given by
\begin{equation*}
f_x'(t)=2(k-n)t^{2(k-n)-1}(T^2-tT\cos\omega)Q_{te^{i\omega}}^{-(k-n+1)}(T)x.
\end{equation*}
For some arbitrary $\lambda>0$ we can now integrate this derivative over the interval $[\lambda,\infty)$. Using also the limit \eqref{Eq_J_2}, this gives
\begin{equation*}
\int_\lambda^\infty f_x'(t)dt=\lim\limits_{t\rightarrow\infty}f_x(t)-f_x(\lambda)=x-\lambda^{2(k-n)}Q_{\lambda e^{i\omega}}^{-(k-n)}(T)x.
\end{equation*}
Bringing the term $\lambda^{2(k-n)}Q_{\lambda e^{i\omega}}^{-(k-n)}(T)x$ to the other side and applying the operator $T^k$ on both sides of this equation, allows us to estimate
\begin{align}
\Vert T^kx\Vert&=\lambda^{2(k-n)}\Vert T^kQ_{\lambda e^{i\omega}}^{-(k-n)}(T)x\Vert+\int_\lambda^\infty\Vert T^kf_x'(t)\Vert dt \notag \\
&=\lambda^{2(k-n)}\Vert T^{k-n}Q_{\lambda e^{i\omega}}^{-(k-n)}(T)T^nx\Vert \notag \\
&\quad+2(k-n)\int_\lambda^\infty t^{2(k-n)-1}\Vert(T-t\cos\omega)Q_{te^{i\omega}}^{-(k-n+1)}T^{k+1}x\Vert dt \notag \\
&\leq\lambda^{k-n}(1+3M)^{k-n}\Vert T^nx\Vert+4(k-n)(1+3M)^{k-n+1}\Vert T^{k+1}x\Vert\int_\lambda^\infty\frac{1}{t^2}dt \notag \\
&=\lambda^{k-n}(1+3M)^{k-n}\Vert T^nx\Vert+\frac{4(k-n)(1+3M)^{k-n+1}\Vert T^{k+1}x\Vert}{\lambda}, \label{Eq_J_4}
\end{align}
where in the third line we used Lemma \ref{lem_Q_resolvent_estimate}. Next we will minimize the right hand side with respect to the parameter $\lambda>0$.

\medskip

$\circ$\;\;First we consider the case $T^{k+1}x\neq 0$. Since $k+1\geq n$ also $T^nx\neq 0$, and the minimum of the right hand side is attained at the point $\lambda=\big(\frac{4(1+3M)\Vert T^{k+1}x\Vert}{\Vert T^nx\Vert}\big)^{\frac{1}{k-n+1}}>0$. Plugging this value $\lambda$ into \eqref{Eq_J_4}, gives the upper bound
\begin{equation}\label{Eq_J_3}
\Vert T^kx\Vert\leq(k-n+1)4^{\frac{k-n}{k-n+1}}(1+3M)^{\frac{(k-n)(k-n+2)}{k-n+1}}\Vert T^nx\Vert^{\frac{1}{k-n+1}}\Vert T^{k+1}x\Vert^{\frac{k-n}{k-n+1}}.
\end{equation}
$\circ$\;\;If $T^{k+1}x=0$, the estimate \eqref{Eq_J_4} looks like
\begin{equation*}
\Vert T^kx\Vert\leq\lambda^{k-n}(1+3M)^{k-n}\Vert T^nx\Vert.
\end{equation*}
But since $\lambda>0$ is arbitrary, we conclude $\Vert T^kx\Vert=0$ and the estimate \eqref{Eq_J_3} is satisfied also in this case. Hence in both cases the estimate \eqref{Eq_J_3} is satisfied, which is exactly the stated inequality \eqref{Eq_J_1} with $m=k+1$.

\medskip

For the induction step $m\rightarrow m+1$ let $x\in D(T^{m+1})$. Using the induction assumption once for $n<k<m$ in the first upcoming inequality, and with $n<m<m+1$ in the second inequality, gives
\begin{align*}
\Vert T^kx\Vert&\leq C_{n,k,m}\Vert T^nx\Vert^{\frac{m-k}{m-n}}\Vert T^mx\Vert^{\frac{k-n}{m-n}} \\
&\leq C_{n,k,m}\Vert T^nx\Vert^{\frac{m-k}{m-n}}\big(C_{n,m,m+1}\Vert T^nx\Vert^{\frac{1}{m-n+1}}\Vert T^{m+1}x\Vert^{\frac{m-n}{m-n+1}}\big)^{\frac{k-n}{m-n}} \\
&=C_{n,k,m}C_{n,m,m+1}^{\frac{k-n}{m-n}}\Vert T^nx\Vert^{\frac{m-k+1}{m-n+1}}\Vert T^{m+1}x\Vert^{\frac{k-n}{m-n+1}}. \qedhere
\end{align*}
\end{proof}

\begin{theorem}\label{thm_K_intermediate}
Let $T$ be as in Definition \ref{defi_Operators_of_minimal_growth}. Then for every $n<k<m\in\mathbb{N}_0$, there holds
\begin{equation*}
D(T^k)\in K_{\frac{k-n}{m-n}}(D(T^n),D(T^m));
\end{equation*}
i.e. according to Definition \ref{defi_Intermediate_spaces} there exists some $C_{n,k,m}\geq 0$, such that
\begin{equation}\label{Eq_K_intermediate}
K(t,x)\leq C_{n,k,m}t^{\frac{k-n}{m-n}}\Vert x\Vert_{D(T^k)},\qquad t>0,\,x\in D(T^k),
\end{equation}
where the $K$-functional is the one of the couple $D(T^n),D(T^m)$.
\end{theorem}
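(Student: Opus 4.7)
My plan is to bypass a direct construction of the decomposition $x = a + b$ with $a \in D(T^n)$ and $b \in D(T^m)$, which would require a combinatorial expansion similar to, but more intricate than, the one used in the proof of Theorem \ref{thm_X_DTn_characterization} (with awkward subcases depending on how $k-n$ compares to $2$). Instead I bootstrap from Theorem \ref{thm_X_DTn_characterization} via the Reiteration Theorem. By the Proposition characterising $K_\theta$-classes, the claim $D(T^k) \in K_{(k-n)/(m-n)}(D(T^n), D(T^m))$ is equivalent to the embedding
\begin{equation*}
D(T^k) \hookrightarrow \big(D(T^n), D(T^m)\big)_{(k-n)/(m-n), \infty}.
\end{equation*}

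For the main case $n \geq 1$ I fix an auxiliary integer $M > m$, so that $0 < n/M, k/M, m/M < 1$. I first claim that for every $j \in \{n, k, m\}$,
\begin{equation*}
D(T^j) \hookrightarrow \big(X, D(T^M)\big)_{j/M, \infty}, \qquad \text{i.e.,}\qquad D(T^j) \in K_{j/M}\big(X, D(T^M)\big).
\end{equation*}
By the equivalence of norms in Theorem \ref{thm_X_DTn_characterization} applied to the couple $(X, D(T^M))$, this amounts to $\sup_{t>0} t^j \psi_x(t) \leq C\|x\|_{D(T^j)}$, where $\psi_x$ is the function from \eqref{Eq_psix} with $M$ in place of $n$. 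For $x \in D(T^j)$, each norm term inside $\psi_x(t)$ has the form $\|T^p Q_{te^{i\omega}}^{-\lceil M/2 \rceil}(T) x\|$ with $p \in \{M, M+1\}$; commuting the closed operator $T^j$ past the bounded pseudo-resolvent (valid on $D(T^j)$) rewrites this as $\|T^{p-j} Q_{te^{i\omega}}^{-\lceil M/2 \rceil}(T)\, T^j x\|$, and Lemma \ref{lem_Q_resolvent_estimate} then produces the pointwise bound $\psi_x(t) \leq C t^{-j}\|T^j x\|$, with the factor of $t$ in front of the second summand of the odd-$M$ case exactly absorbing the unit discrepancy in exponents.

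For $j \in \{n, m\}$, Theorem \ref{thm_J_intermediate} applied to the triple $(0, j, M)$ yields the inequality $\|T^j x\| \leq C \|x\|^{(M-j)/M}\|T^M x\|^{j/M}$ for $x \in D(T^M)$; combined with the trivial bound $\|x\| \leq \|x\|^{(M-j)/M}\|x\|_{D(T^M)}^{j/M}$, this upgrades to the graph-norm inequality $\|x\|_{D(T^j)} \leq C'\|x\|_X^{(M-j)/M}\|x\|_{D(T^M)}^{j/M}$, i.e. $D(T^j) \in J_{j/M}(X, D(T^M))$. Applying the Reiteration Theorem to the couple $(X, D(T^M))$ with $E_0 = D(T^n)$, $E_1 = D(T^m)$, and outer parameter $\theta = (k-n)/(m-n) \in (0,1)$, the weighted average $(1-\theta)\tfrac{n}{M} + \theta\tfrac{m}{M}$ simplifies to $\tfrac{k}{M}$, so that
\begin{equation*}
\big(D(T^n), D(T^m)\big)_{(k-n)/(m-n), \infty} = \big(X, D(T^M)\big)_{k/M, \infty}.
\end{equation*}
The $K$-embedding of the previous paragraph applied at $j = k$ then yields the desired inclusion.

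Finally, the edge case $n = 0$ lies outside the Reiteration Theorem's hypotheses (which require $\theta_0 \in (0,1)$), but here the target embedding $D(T^k) \hookrightarrow (X, D(T^m))_{k/m, \infty}$ is precisely the $K$-embedding of the second paragraph applied with $M = m$ and $j = k$, so no reiteration is needed. The main obstacle I foresee is not analytic but combinatorial: one must carefully track the parity of $M$ in \eqref{Eq_psix} and the matching exponents in Lemma \ref{lem_Q_resolvent_estimate} to verify that the same bound $t^j \psi_x(t) \leq C\|T^j x\|$ emerges uniformly in both parity cases; beyond this, no new analytic input beyond Theorems \ref{thm_X_DTn_characterization}, \ref{thm_J_intermediate} and the Reiteration Theorem is required.
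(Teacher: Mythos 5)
Your argument is correct, but it follows a genuinely different route from the paper. The paper proves \eqref{Eq_K_intermediate} directly: for $t\geq 1$ it takes the trivial decomposition $a=x$, $b=0$, and for $0<t\leq 1$ it sets $\tau=t^{-\frac{1}{m-n}}$ and splits the trinomial expansion of $x=(T^2+2\tau T\cos\omega+\tau^2)^mQ_{\tau e^{i\omega}}^{-m}(T)x$ into the terms with $2\alpha+\beta\geq k$ (the $D(T^n)$-part) and $2\alpha+\beta\leq k-1$ (the $D(T^m)$-part), estimating both via Lemma \ref{lem_Q_resolvent_estimate}; this is self-contained, yields explicit constants, and uses neither Theorem \ref{thm_X_DTn_characterization}, Theorem \ref{thm_J_intermediate}, the reiteration theorem, nor the $J_\theta/K_\theta$ characterization. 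You instead prove what is essentially the $n=0$ case directly (your pointwise bound $t^{j}\psi_x(t)\leq C\Vert T^jx\Vert$ for $x\in D(T^j)$, obtained by commuting $T^j$ past the pseudo-resolvent and invoking Lemma \ref{lem_Q_resolvent_estimate} — the parity bookkeeping you flag does check out, and the commutation on $D(T^j)$ is the same tacit step the paper uses in its own estimates), and then transfer to general $n$ by combining Theorem \ref{thm_J_intermediate} for the triples $(0,n,M)$ and $(0,m,M)$ with the reiteration theorem and the Proposition characterizing $K_\theta$ via the embedding into $(\cdot,\cdot)_{\theta,\infty}$. The trade-off: your proof avoids a second trinomial decomposition and exhibits the result as a formal consequence of the two preceding theorems plus abstract interpolation machinery, but that machinery (reiteration, the $J/K$ characterization) is exactly the part the paper imports from the complex theory without reproving in the quaternionic setting, and you lose the explicit constant $3(4+12M)^m$ of the direct argument. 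Two cosmetic points: your auxiliary integer $M$ clashes notationally with the constant $M$ of Definition \ref{defi_Operators_of_minimal_growth}, and when applying the reiteration theorem you should note $\theta_0=n/M\neq m/M=\theta_1$ and that $D(T^m)\hookrightarrow D(T^k)\hookrightarrow D(T^n)$ (Lemma \ref{lem_DT_embedding}) supplies the intermediate-space hypothesis of the Proposition; both are immediate.
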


\begin{proof}
Let $x\in D(T^k)$. For every $t\geq 1$, we can choose $a=x$ and $b=0$ to estimate the $K$-functional by
\begin{equation}\label{Eq_K_intermediate_3}
K(t,x)\leq\Vert a\Vert_{D(T^n)}+t\Vert b\Vert_{D(T^m)}=\Vert x\Vert_{D(T^n)}\leq C_{n,m}t^{\frac{k-n}{m-n}}\Vert x\Vert_{D(T^k)},\qquad t\geq 1,
\end{equation}
where in the last inequality we used $t\geq 1$ as well as the constant $C_{n,m}$ from the embedding $D(T^m)\hookrightarrow D(T^n)$ in Lemma \ref{lem_DT_embedding}. For $0<t\leq 1$ on the other hand we choose $\tau=t^{-\frac{1}{m-n}}\geq 1$, and consider a similar splitting as in \eqref{Eq_X_DT_interpolation_4}, namely
\begin{align*}
x=&\underbrace{\sum\limits_{\stackrel{\alpha,\beta,\gamma=0}{\mathsmaller{\stackrel{\alpha+\beta+\gamma=m}{2\alpha+\beta\geq k}}}}^m\frac{m!(2\cos\omega)^\beta}{\alpha!\beta!\gamma!\tau^{-\beta-2\gamma}}T^{2\alpha+\beta}Q_{\tau e^{i\omega}}^{-m}(T)x}_{=:a\in D(T^k)\subseteq D(T^n)} \\
&\hspace{1cm}+\underbrace{\sum\limits_{\stackrel{\alpha,\beta,\gamma=0}{\mathsmaller{\stackrel{\alpha+\beta+\gamma=m}{2\alpha+\beta\leq k-1}}}}^m\frac{m!(2\cos\omega)^\beta}{\alpha!\beta!\gamma!\tau^{-\beta-2\gamma}}T^{2\alpha+\beta}Q_{\tau e^{i\omega}}^{-m}(T)x}_{=:b\in D(T^{2m+1})\subseteq D(T^m)}.
\end{align*}
Using Lemma \ref{lem_Q_resolvent_estimate}, the norms of these vectors $a$ and $b$ can now be estimated by
\begin{align}
\Vert a\Vert_{D(T^n)}&\leq\sum\limits_{\stackrel{\alpha,\beta,\gamma=0}{\mathsmaller{\stackrel{\alpha+\beta+\gamma=m}{2\alpha+\beta\geq k}}}}^m\frac{m!2^\beta}{\alpha!\beta!\gamma!}\frac{\Vert T^{2\alpha+\beta-k}Q_{\tau e^{i\omega}}^{-m}(T)T^kx\Vert+\Vert T^{n-k+2\alpha+\beta}Q_{\tau e^{i\omega}}^{-m}(T)T^kx\Vert}{\tau^{-\beta-2\gamma}} \notag \\
&\leq\sum\limits_{\stackrel{\alpha,\beta,\gamma=0}{\mathsmaller{\stackrel{\alpha+\beta+\gamma=m}{2\alpha+\beta\geq k}}}}^m\frac{m!2^\beta(1+3M)^m}{\alpha!\beta!\gamma!}\Big(\frac{1}{\tau^k}+\frac{1}{\tau^{k-n}}\Big)\Vert T^kx\Vert \notag \\
&\leq4^m(1+3M)^m\Big(\frac{1}{\tau^k}+\frac{1}{\tau^{k-n}}\Big)\Vert T^kx\Vert\leq 2(4+12M)^m\frac{\Vert T^kx\Vert}{\tau^{k-n}}, \label{Eq_K_intermediate_1}
\end{align}
as well as
\begin{align}
\Vert b\Vert_{D(T^m)}&\leq\sum\limits_{\stackrel{\alpha,\beta,\gamma=0}{\mathsmaller{\stackrel{\alpha+\beta+\gamma=m}{2\alpha+\beta\leq k-1}}}}^m\frac{m!2^\beta}{\alpha!\beta!\gamma!}\frac{\Vert T^{2\alpha+\beta}Q_{\tau e^{i\omega}}^{-m}(T)x\Vert+\Vert T^{m-k+2\alpha+\beta}Q_{\tau e^{i\omega}}^{-m}(T)T^kx\Vert}{\tau^{-\beta-2\gamma}} \notag \\
&\leq\sum\limits_{\stackrel{\alpha,\beta,\gamma=0}{\mathsmaller{\stackrel{\alpha+\beta+\gamma=m}{2\alpha+\beta\leq k-1}}}}^m\frac{m!2^\beta(1+3M)^m}{\alpha!\beta!\gamma!}\big(\Vert x\Vert+\tau^{m-k}\Vert T^kx\Vert\big) \notag \\
&\leq 4^m(1+3M)^m\big(\Vert x\Vert+\tau^{m-k}\Vert T^kx\Vert\big), \label{Eq_K_intermediate_2}
\end{align}
where from the respective second to third lines we added the missing terms in the sum by removing the conditions $2\alpha+\beta\geq k$ and $2\alpha+\beta\leq k-1$, and used the trinomial sum $\sum\limits_{\alpha,\beta,\gamma=0,\alpha+\beta+\gamma=m}^m\frac{m!2^\beta}{\alpha!\beta!\gamma!}=4^m$. With \eqref{Eq_K_intermediate_1} and \eqref{Eq_K_intermediate_2} we then get
\begin{align}
K(t,x)&\leq\Vert a\Vert_{D(T^n)}+t\Vert b\Vert_{D(T^m)} \notag \\
&\leq(4+12M)^m\Big(\frac{2\Vert T^kx\Vert}{\tau^{k-n}}+t\Vert x\Vert+t\tau^{m-k}\Vert T^kx\Vert\Big) \notag \\
&=(4+12M)^m\Big(3t^{\frac{k-n}{m-n}}\Vert T^kx\Vert+t\Vert x\Vert\Big) \notag \\
&\leq 3(4+12M)^mt^{\frac{k-n}{m-n}}\Vert x\Vert_{D(T^k)},\qquad 0<t\leq 1. \label{Eq_K_intermediate_4}
\end{align}
Hence, in \eqref{Eq_K_intermediate_3} and \eqref{Eq_K_intermediate_4} we have shown that in both cases $t\geq 1$ and $0<t\leq 1$ the estimate \eqref{Eq_K_intermediate} is satisfied.
\end{proof}

\bibliographystyle{amsplain}

\end{document}